\def\serieslogo@{} \def\@setcopyright{} \makeatother
\renewcommand*\env@matrix[1][c]{\hskip -\arraycolsep
  \let\@ifnextchar\new@ifnextchar
  \array{*\c@MaxMatrixCols #1}}
\numberwithin{equation}{section}
\newtheorem{thm}{Theorem}[section]
\newtheorem{cor}[thm]{Corollary}
\newtheorem{lem}[thm]{Lemma}
\newtheorem{prop}[thm]{Proposition}
\theoremstyle{definition}
\newtheorem{defn}[thm]{Definition}
\newtheorem{rem}[thm]{Remark}
\newtheorem{exam}[thm]{Example}
\newtheorem*{ackn}{Acknowledgments}
\newcommand{\lxr}{\longrightarrow}
\newcommand{\A}{\mathscr A}
\newcommand{\B}{\mathscr B}
\newcommand{\C}{\mathscr C}
\newcommand{\G}{\mathcal G}
\newcommand{\Hcal}{\mathcal H}
\newcommand{\X}{\mathcal X}
\newcommand{\Y}{\mathcal Y}
\newcommand{\Z}{\mathcal Z}
\newcommand{\Acal}{\mathscr A}
\newcommand{\Bcal}{\mathscr B}
\newcommand{\Ccal}{\mathscr C}
\newcommand{\Xcal}{\mathcal X}
\newcommand{\Ycal}{\mathcal Y}
\newcommand{\Zcal}{\mathcal Z}
 \DeclareMathOperator{\inc}{\mathsf{inc}}
\DeclareMathOperator*{\Ker}{\mathsf{Ker}}
 \DeclareMathOperator*{\Image}{\mathsf{Im}}
\DeclareMathOperator*{\Coker}{\mathsf{Coker}}
\DeclareMathOperator*{\Mod}{\mathsf{Mod}-\!}
\DeclareMathOperator*{\End}{\mathsf{End}}
\DeclareMathOperator{\Hom}{\mathsf{Hom}}
\DeclareMathOperator*{\Ext}{\mathsf{Ext}}
\DeclareMathOperator*{\Tor}{\mathsf{Tor}}
\newcommand{\iden}{\operatorname{Id}\nolimits}
\newsavebox{\proofbox}
\savebox{\proofbox}{\begin{picture}(7,7)%
  \put(0,0){\framebox(7,7){}}\end{picture}}
\begin{document}

\title{Recollements of Module Categories}

\author{Chrysostomos Psaroudakis, Jorge Vit\'oria}
\address{Department of Mathematics, University of Ioannina, 45110
Ioannina, Greece} \email{hpsaroud@cc.uoi.gr}
\address{Fakult\"at f\"ur Mathematik, Universit\"at Bielefeld, Postfach 10 01 31, D-33501, Bielefeld, Germany}
\email{jvitoria@math.uni-bielefeld.de}
\thanks{The first named author is co-funded by the European Union - European Social Fund (ESF) and National Sources, in the framework of the program ``HRAKLEITOS II" of the "Operational Program Education and Life Long Learning" of the Hellenic Ministry of Education, Life Long Learning and religious affairs. The second named author was supported by DFG  - SPP 1388, at the University of Stuttgart, for most of this project. This work was completed during a visit of the second author to the SFB 701 in Bielefeld.
}
\keywords{Recollement, \textsf{TTF}-triple, Idempotent ideal, Ring epimorphism}
\subjclass{18E35; 18E40; 16S90}

\begin{abstract}
We establish a correspondence between recollements of abelian categories up to equivalence and certain \textsf{TTF}-triples. For a module category we show, moreover, a correspondence with idempotent ideals, recovering a theorem of Jans. Furthermore, we show that a recollement whose terms are module categories is equivalent to one induced by an idempotent element, thus answering a question by Kuhn.
\end{abstract}
\maketitle

\section{Introduction}

A recollement of abelian categories is an exact sequence of abelian categories 
where both the inclusion and the quotient functors admit left and right adjoints. 
They first appeared in the construction of the category of perverse sheaves on a singular space by Beilinson, Bernstein and Deligne (\cite{BBD}), 
arising from recollements of triangulated categories with additional structures (compatible t-structures). 
Properties of recollements of abelian categories were more recently studied by Franjou and Pirashvilli in \cite{Pira}, motivated by the MacPherson-Vilonen construction for the category of perverse sheaves (\cite{MV}). 

Recollements of abelian categories were used by Cline, Parshall and Scott to study module categories of finite dimensional algebras over a field  (see \cite{PS}). Later, Kuhn used them in the study of polynomial functors (\cite{Kuhn}), which arise not only in representation theory but also in algebraic topology and algebraic K-theory.  Recollements of triangulated categories have appeared in the work of Angeleri H\"ugel, Koenig and Liu in connection with tilting theory, homological conjectures and stratifications of derived categories of rings (\cite{AKL}, \cite{AKL2}, \cite{AKL3}). In particular, Jordan-H\"older theorems for recollements of derived module categories were obtained for some classes of algebras (\cite{AKL2}, \cite{AKL3}). Also, Chen and Xi have investigated recollements in relation with tilting theory (\cite{Xi:1}) and algebraic K-theory (\cite{Xi:2}).  Homological properties of recollements of abelian and triangulated categories have also been studied in \cite{recol}. 

Recollements and \textsf{TTF}-triples of triangulated categories are well-known to be in bijection (\cite{BBD}, \cite{Neeman}, \cite{Nicolas}). 
We will show that such a bijection holds for $\Mod{A}$ (see Proposition \ref{classif Mod}), where $\Mod{A}$ denotes the category of right $A$-modules, for a unitary ring $A$. Similar considerations in $\Mod{A}$ were made for {\em split} \textsf{TTF}-triples in \cite{NS}.  More generally, we show that recollements of an abelian category $\A$ (up to equivalence) are in bijection with bilocalising \textsf{TTF}-classes (see Theorem \ref{classif}).

Examples of recollements are easily constructed for the module category of triangular matrix rings (see  \cite{CZ}, \cite{GP}, \cite{LW}) or, more generally, using idempotent elements of a ring (see Example \ref{example}). In fact, we will see that there is a correspondence between idempotent ideals of $A$ and recollements of $\Mod{A}$, recovering Jans' bijection (\cite{Jans}) between \textsf{TTF}-triples in $\Mod{A}$ and idempotent ideals. Moreover, Kuhn conjectured in \cite{Kuhn} that if the categories of a recollement are equivalent to categories of modules over finite dimensional algebras over a field, then it is equivalent to one arising from an idempotent element. 
In our main result, we prove this conjecture for general rings. \\ 

\noindent\textbf{Theorem} [Theorem \ref{bijection Mod}, Corollary $5.5$] 
\textit{A recollement of $\Mod{A}$ is equivalent to a recollement in which the categories involved are module categories if and only if it is equivalent to a recollement induced by an idempotent element of a ring $S$, Morita equivalent to $A$. If, furthermore, $A$ is semiprimary, then any recollement of $\Mod{A}$ is equivalent to a recollement induced by an idempotent element of $A$.}\\

This paper is structured as follows. Section $2$ collects some preliminaries on recollements, (co)localisations, \textsf{TTF}-triples and ring epimorphisms. 
In section 3, we discuss \textsf{TTF}-triples in abelian categories and we use them in section 4 to classify recollements of abelian categories. Finally, in section 5 we focus on recollements of module categories, proving Kuhn's conjecture.

\begin{ackn}
This work was developed during a visit, funded by DFG - SPP 1388, of the first author to the University of Stuttgart in January $2013$. The first author would like to thank Steffen Koenig for the invitation and the warm hospitality. Both authors wish to express their gratitude to Apostolos Beligiannis and Steffen Koenig for valuable suggestions and comments. 
\end{ackn}

\section{Preliminaries}
Throughout, $\A$ denotes an abelian category. All subcategories considered are strict. For an additive  functor  $F$ between additive categories, we denote by $\Image F$ its essential image and by $\Ker F$ its kernel.

\subsection{\textsf{TTF}-triples}
A \textbf{torsion pair} in $\A$ is a pair $(\X,\Y)$ of full subcategories satisfying$\colon$ 
\begin{itemize}
\item $\Hom_{\A}(\X,\Y)=0$, i.e. $\Hom_{\A}(X,Y)=0$ $\forall X\in
\X$, $\forall Y\in \Y$;

\item For every object $A\in \A$, there are objects $X_A$ in $\X$ and $Y^A$ in $\Y$ and a short exact sequence $0\lxr X_A\lxr A\lxr Y^A\lxr 0.$
\end{itemize}
Given a torsion pair $(\X,\Y)$ in $\Acal$, we say that $\X$ is a \textbf{torsion class} and
$\Y$ is a \textbf{torsion-free class}. It follows easily from the definition that 
$$\X={^\circ{\Y}}:=\{A\in\Acal: \Hom_\Acal(A,Y)=0, \forall Y \in \Ycal\},$$ $$\Y={{\X}^\circ}:=\{A\in\Acal: \Hom_\Acal(X,A)=0, \forall X \in \Xcal\}$$ 
and that the assignment $\textsf{R}_{\X}(A)=X_A$ (respectively, $\textsf{L}_{\Y}(A)=Y^A$) yields an additive functor $\textsf{R}_{\X}\colon \A\lxr \X$ (respectively, $\textsf{L}_{\Y}\colon \A\lxr \Y$) which is right (respectively, left) adjoint of the inclusion functor $\textsf{i}_{\X}\colon \X\lxr \A$ (respectively, $\textsf{i}_{\Y}\colon \Y\lxr \A$). Hence, $\Xcal$ (respectively, $\Ycal$) is a reflective (respectively, coreflective) subcategory of $\Acal$.  Moreover, the endofunctors $\textsf{i}_\Xcal\textsf{R}_\Xcal$ and $\textsf{i}_\Ycal\textsf{L}_\Ycal$ satisfy$\colon$ 
\begin{itemize}
\item $\textsf{i}_\Xcal\textsf{R}_\Xcal$ is a \textbf{radical functor}, i.e., there is $\mu\colon \textsf{i}_\Xcal\textsf{R}_\Xcal\lxr \iden_{\A}$ a natural transformation such that $\mu_A$ is a monomorphism and $\textsf{i}_\Xcal\textsf{R}_\Xcal(\Coker\mu_A)=0$. 
\item $\textsf{i}_\Ycal\textsf{L}_\Ycal$ is a \textbf{coradical functor}, i.e., there is $\nu\colon \iden_{\A}\lxr \textsf{i}_\Ycal\textsf{L}_\Ycal$ a natural transformation such that $\mu_A$ is an epimorphism and $\textsf{i}_\Ycal\textsf{L}_\Ycal(\Ker\nu_A)=0$. 
\item Both $\textsf{i}_\Xcal\textsf{R}_\Xcal$ and $\textsf{i}_\Ycal\textsf{L}_\Ycal$ are \textbf{idempotent}, i.e., both $\mu_{\textsf{i}_\Xcal\textsf{R}_\Xcal(A)}$ and $\nu_{\textsf{i}_\Ycal\textsf{L}_\Ycal(A)}$ are isomorphisms, for all $A$ in $\Acal$.
\end{itemize}
In fact, there are bijections between torsion pairs in $\A$, idempotent radical  functors $F\colon \A\lxr \A$ and idempotent coradical functors $G\colon \A\lxr \A$. Thus, the endofunctors $\textsf{i}_\Xcal\textsf{R}_\Xcal$ and $\textsf{i}_\Ycal\textsf{L}_\Ycal$ determine the torsion pair uniquely (\cite[Theorem $2.8$]{Dickson}, \cite[Theorem $1.2$]{Col}). 
Often, torsion and torsion-free classes can be identified by closure properties. Recall that $\A$ is said to be \textbf{well-powered} if  the class of subobjects of any given object forms a set.

\begin{prop}\cite[Theorem $2.3$]{Dickson}\label{Dickson}
Let $\Acal$ be a well-powered, complete and cocomplete abelian category. A full subcategory $\X$ is a torsion (respectively, torsion-free) class if and only if it is closed under quotients, extensions and coproducts (respectively, subobjects, extensions and products).
\end{prop}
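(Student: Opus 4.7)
The plan is to dispose of one direction by direct inspection and concentrate the real work on the other. If $\X$ is a torsion class with torsion-free companion $\Y=\X^\circ$, then closure under coproducts of $\X$ follows from $\Hom_\A(\bigoplus X_i, Y)=\prod \Hom_\A(X_i,Y)=0$; closure under quotients follows because any map from a quotient of $X\in\X$ to $Y\in\Y$ lifts to a zero map from $X$; closure under extensions follows from applying $\Hom_\A(-,Y)$ to a short exact sequence $0\to X'\to X\to X''\to 0$ with $X',X''\in\X$. The torsion-free case is dual.

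For the substantial direction, I would treat the torsion case; the torsion-free case then follows by duality in the opposite category. Given $A\in\A$, well-poweredness gives a set $\Si_A=\{B\subseteq A\colon B\in\X\}$. Using cocompleteness, form $\bigoplus_{B\in\Si_A} B\in\X$ (coproduct-closure) and let $X_A$ be the image of the canonical morphism $\bigoplus_{B\in\Si_A} B\lxr A$. Then $X_A\in\X$ by quotient-closure, and $X_A$ is the largest subobject of $A$ lying in $\X$. Setting $Y^A:=A/X_A$, the nontrivial point is to check $Y^A\in\X^\circ$. Suppose $f\colon X\lxr Y^A$ is nonzero with $X\in\X$; replacing $X$ by $\Image f$ (still in $\X$ by quotient-closure), pull back the monomorphism $X\monic Y^A$ along $A\epic Y^A$ to obtain $W\subseteq A$ in a short exact sequence
\[ 0\lxr X_A\lxr W\lxr X\lxr 0. \]
By extension-closure $W\in\X$, so $W\in\Si_A$ and hence $W\subseteq X_A$, forcing $X=W/X_A=0$, a contradiction.

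The torsion-free case runs dually: for $A\in\A$, completeness and well-poweredness let us form $X_A:=\bigcap\{K\subseteq A\colon A/K\in\Y\}$ as the kernel of $A\lxr\prod_K A/K$. Then $Y^A:=A/X_A$ embeds into $\prod_K A/K\in\Y$ (product-closure), hence $Y^A\in\Y$ (subobject-closure). To verify $X_A\in{}^\circ\Y$, given a nonzero $f\colon X_A\lxr Y$ with $Y\in\Y$, replace $Y$ by $\Image f$ and form the pushout of $X_A\monic A$ along $f$; in an abelian category this is a square with $Y\monic P$ and $A\epic P$, and one computes an exact sequence $0\lxr Y\lxr P\lxr Y^A\lxr 0$, so $P\in\Y$ by extension-closure. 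Writing $P=A/K$, the defining property of $X_A$ yields $X_A\subseteq K$; but a direct check on the pushout shows $K=\Ker f$, giving $f=0$, a contradiction. The main obstacle throughout is purely set-theoretic: guaranteeing that the relevant collection of subobjects forms a set so that the coproduct (respectively product/intersection) is available, and that this coproduct/intersection realises the maximal/minimal object with the required closure property. This is exactly what the well-powered, complete, and cocomplete hypotheses provide; once available, the verifications are routine diagram chases in $\A$.
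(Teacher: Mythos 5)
Your proof is correct and complete in both directions; the paper itself gives no argument here, citing Dickson's Theorem~2.3, and your construction of $X_A$ as the image of $\bigoplus_{B\in\Si_A}B\lxr A$ (dually, as $\bigcap K$ via the kernel of $A\lxr\prod_K A/K$) together with the pullback/pushout step verifying $Y^A\in\X^\circ$ (respectively $X_A\in{}^\circ\Y$) is essentially Dickson's original argument. No gaps.
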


Recall that a torsion pair $(\X,\Y)$ in $\A$ is \textbf{hereditary} if $\X$ is closed under subobjects and \textbf{cohereditary} if $\Y$ is closed under quotients. We will be interested in classes which are both torsion and torsion-free.

\begin{defn}
A triple $(\X,\Y,\Z)$ of full subcategories of $\A$ is called a \textbf{\textsf{TTF}-triple} (and $\Y$ is a {\bf \textsf{TTF}-class}) if $(\X,\Y)$ and $(\Y,\Z)$ are torsion pairs.
\end{defn}
 
Clearly, if $(\X,\Y,\Z)$ is a \textsf{TTF}-triple in $\A$, then the torsion pair $(\X,\Y)$ is cohereditary and $(\Y,\Z)$ is hereditary. By Proposition \ref{Dickson}, when $\A$ is well-powered, complete and cocomplete, a full subcategory $\Y$ of $\A$ is a \textsf{TTF}-class if and only if it is closed under products, coproducts, extensions, subobjects and quotients. We refer to \cite{BR} for further details on torsion theories and \textsf{TTF}-triples in both abelian and triangulated categories. 
In ring theory, \textsf{TTF}-triples are well understood due to the following result of Jans, which  will be proved in section $5$ using our results on \textsf{TTF}-triples of abelian categories. 

\begin{thm}\cite[Corollary $2.2$]{Jans}\label{Jans}
There is a bijection between \textsf{TTF}-triples in $\Mod{A}$ and idempotent ideals of the ring $A$.
\end{thm}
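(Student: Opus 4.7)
The plan is to exhibit mutually inverse assignments between idempotent two-sided ideals of $A$ and TTF-triples in $\Mod{A}$. Given an idempotent ideal $I = I^2$, I would consider the full subcategory
$$\Y_I := \{M \in \Mod{A} : MI = 0\},$$
which coincides with $\Mod{A/I}$ embedded via restriction of scalars. As a Serre subcategory of $\Mod{A}$ that is also closed under arbitrary products and coproducts, $\Y_I$ satisfies the closure hypotheses of Proposition~\ref{Dickson} for both a torsion and a torsion-free class, hence is a TTF-class. Conversely, given a TTF-triple $(\X,\Y,\Z)$, I would set $I := \textsf{i}_{\X}\textsf{R}_{\X}(A_A)$, the torsion part of the right regular module. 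Functoriality of $\textsf{R}_{\X}$ evaluated at the endomorphism of $A_A$ given by left multiplication by any $a \in A$ shows that $I$ is stable under left multiplication, so $I$ is a two-sided ideal of $A$.

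For the forward direction, hom-orthogonality identifies the induced torsion pairs as $\X_I = {}^{\circ}\Y_I = \{M : MI = M\}$ and $\Z_I = \Y_I^{\circ} = \{M : \ann_M(I) = 0\}$, with the canonical torsion sequences being $0 \to MI \to M \to M/MI \to 0$ and $0 \to \ann_M(I) \to M \to M/\ann_M(I) \to 0$. The idempotence of $I$ enters precisely to guarantee that $MI \in \X_I$ (since $(MI)I = MI^2 = MI$) and that $M/\ann_M(I) \in \Z_I$ (since $\ann_M(I^2) = \ann_M(I)$). For the backward direction, I need to argue that the ideal $I := \textsf{i}_{\X}\textsf{R}_{\X}(A)$ is idempotent. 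First I would show $\Y = \Mod{A/I}$: for any $M \in \Y$, every homomorphism $A \to M$ vanishes on $I \in \X$ by hom-orthogonality, so $MI = 0$; conversely, $A/I \in \Y$, and since $\Y$ is closed under quotients and coproducts, every $A/I$-module lies in $\Y$. Then $I/I^2$ lies in $\X$ (as a quotient of $I \in \X$) and also in $\Y$ (since it is annihilated by $I$); because $\X \cap \Y = 0$ (any object of the intersection is killed by its own identity), we conclude $I^2 = I$.

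These two constructions are mutual inverses: starting from an idempotent $I$, the torsion part of $A_A$ in the TTF-triple $(\X_I,\Y_I,\Z_I)$ is $AI = I$; and starting from a TTF-triple $(\X,\Y,\Z)$, the identification $\Y = \Mod{A/I}$ together with uniqueness of the complementary classes determines the whole triple. The main obstacle I anticipate is the double role played by the idempotence condition: in one direction it must be imported by hand to verify $MI \in \X_I$ and $M/\ann_M(I) \in \Z_I$, while in the other it must be \emph{derived} from the observation that $I/I^2$ belongs simultaneously to $\X$ and to $\Y$. A secondary delicate point is checking that $\textsf{R}_{\X}(A)$ is two-sided — a feature specific to the module-theoretic setting that leans on the fact that left multiplications on $A_A$ are themselves module endomorphisms, something unavailable for general abelian TTF-triples.
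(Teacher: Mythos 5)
Your proof is correct, but it follows the classical module-theoretic route (essentially Jans' original argument) rather than the one this paper takes. You construct the bijection directly: $I\mapsto \Y_I=\{M: MI=0\}$ with explicit torsion sequences $0\to MI\to M\to M/MI\to 0$ and $0\to \ann_M(I)\to M\to M/\ann_M(I)\to 0$, and in the reverse direction you extract $I$ as the $(\X,\Y)$-torsion part of $A_A$, proving idempotence by placing $I/I^2$ in $\X\cap\Y=0$. The paper instead derives the theorem (in Proposition \ref{classif Mod}) from two more abstract ingredients: the bijection of Proposition \ref{gen Jans} between \textsf{TTF}-triples and right exact coradical endofunctors preserving coproducts, and Proposition \ref{Serre biref Mod}, which uses the classification of bireflective subcategories by ring epimorphisms (Theorem \ref{Gab de la}) to show that a bireflective Serre subcategory is $\Image f_*$ for a \emph{surjective} epimorphism $f\colon A\to A/I$, with idempotence detected homologically via $\Tor_1^A(A/I,A/I)=I/I^2=0$ (Remark \ref{Tor}). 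Your approach is more elementary and self-contained; the paper's buys the identification of the \textsf{TTF}-class with $\Mod{A/I}$ as a byproduct of machinery it needs anyway for the recollement classification. Two small points in your write-up: the appeal to Proposition \ref{Dickson} requires $\Y_I$ to be closed under extensions, and that closure itself already uses $I=I^2$ (if $MI\subseteq K$ and $KI=0$ then $MI=MI^2=0$), so idempotence does not enter "precisely" only where you say it does; and your direct verification of the two torsion sequences makes the appeal to Proposition \ref{Dickson} redundant, so you may as well drop it.
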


\subsection{Localisations and Colocalisations}
For a subcategory $\Y$ of $\A$ closed under subobjects, quotients and extensions, Gabriel constructed in \cite{Gabriel} an abelian category $\A/\Y$ with morphisms 
\begin{equation}\label{homs}
\Hom_{\A/\Y}(j^*(M),j^*(N))=\varinjlim_{\stackrel{N'\leq N: N'\in\Ycal}{M'\leq M:M/M'\in\Ycal}}\Hom_\Acal(M',N/N').
\end{equation}
Such a subcategory $\Y$ is called a \textbf{Serre subcategory} and it yields an exact and dense \textbf{quotient functor} $j^*\colon \Acal \lxr \Acal/\Ycal$. A Serre subcategory $\Y$ is said to be \textbf{localising} (respectively, \textbf{colocalising}) if $j^*$  admits a right (respectively, left) adjoint. Moreover, it is said to be \textbf{bilocalising} if it is both localising and colocalising.
These properties are related to the structure of subcategories orthogonal to $\Y$ with respect to the pairings $\Hom_\Acal(-,-)$ and $\Ext^1_\Acal(-,-)$ (in the sense of Yoneda), i.e., 
$${^\perp{\Y}}:=\{A\in\Acal: \Hom_\Acal(A,Y)=0={\Ext}_\Acal^1(A,Y), \forall Y\in \Ycal\} \ \ \text{and} $$
$${{\Y}^\perp}:=\{A\in\Acal: \Hom_\Acal(Y,A)=0={\Ext}_\Acal^1(Y,A), \forall Y\in \Ycal\}. \ \ \ \ \ $$

\begin{thm}\cite[Lemma $2.1$, Proposition $2.2$]{Geigle-Lenzing}\label{loc coloc}
The following hold for a Serre subcategory $\Ycal$ of $\Acal$. 
\begin{enumerate}
\item The quotient functor $j^*$ induces fully faithful functors ${{\Y}^\perp}\lxr \Acal/\Ycal$ and ${^\perp{\Y}}\lxr \Acal/\Ycal$.
\item The functor $j^*\colon{{\Y}^\perp}\lxr \Acal/\Ycal$ is an equivalence if and only if $\Ycal$ is localising, in which case a quasi-inverse for $j^*$ is its right adjoint $j_*$.
\item The functor $j^*\colon{^\perp{\Y}}\lxr \Acal/\Ycal$ is an equivalence if and only if $\Ycal$ is colocalising, in which case a quasi-inverse for $j^*$ is its left adjoint $j_!$.
\end{enumerate}
\end{thm}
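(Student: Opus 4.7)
The plan is to treat the three parts in order, using part (1) as the computational engine for (2) and (3).

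For (1), I would compute $\Hom_{\A/\Y}(j^*M, j^*N)$ for $M,N \in \Y^\perp$ directly from the colimit formula \eqref{homs}. Any $N' \le N$ with $N' \in \Y$ must vanish, since the inclusion $N' \hookrightarrow N$ lies in $\Hom_\A(\Y,\Y^\perp)=0$. Hence the colimit collapses to $\colim_{M/M'\in\Y}\Hom_\A(M',N)$. For each admissible $M' \le M$, the long exact sequence obtained by applying $\Hom_\A(-,N)$ to $0 \to M' \to M \to M/M' \to 0$, together with the vanishing $\Hom_\A(M/M',N)=0=\Ext^1_\A(M/M',N)$ (from $M/M'\in\Y$ and $N\in\Y^\perp$), gives that restriction $\Hom_\A(M,N)\to \Hom_\A(M',N)$ is bijective; passing to the colimit then yields $\Hom_\A(M,N)\cong \Hom_{\A/\Y}(j^*M, j^*N)$. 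The argument for ${}^\perp\Y$ is dual: if $M\in {}^\perp\Y$, the epimorphism $M\twoheadrightarrow M/M'$ with $M/M'\in\Y$ lies in $\Hom_\A({}^\perp\Y,\Y)=0$, forcing $M=M'$; then the colimit runs only over $N'\le N$ in $\Y$, and a symmetric long exact sequence argument finishes the job.

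For the forward direction of (2), if $\Y$ is localising with right adjoint $j_*$, the standard Gabriel localisation theorem gives that the counit $j^*j_* \to \id$ is a natural isomorphism, so $j_*$ is fully faithful and $j^*$ restricts to an equivalence on $\Image\, j_*$ with quasi-inverse $j_*$. It remains to identify $\Image\, j_* = \Y^\perp$. The inclusion $\Image\, j_* \subseteq \Y^\perp$: for $X \in \A/\Y$ and $Y\in\Y$, $\Hom_\A(Y, j_*X)\cong \Hom_{\A/\Y}(j^*Y, X)=0$; and given an extension $0\to j_*X \to E \to Y \to 0$, exactness of $j^*$ yields $j^*E\cong X$, so the unit $\eta_E\colon E \to j_*j^*E \cong j_*X$ composed with $j_*X\hookrightarrow E$ equals $\eta_{j_*X}$ by naturality, which is an isomorphism by the triangular identity (a consequence of $j_*$ being fully faithful); thus $\eta_E$ provides a retraction and $\Ext^1_\A(Y,j_*X)=0$. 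The reverse inclusion $\Y^\perp \subseteq \Image\, j_*$: for $A\in\Y^\perp$, the unit $\eta_A\colon A \to j_*j^*A$ satisfies $j^*\eta_A$ iso, so $\ker\eta_A$ and $\coker\eta_A$ lie in $\Y$; the orthogonality of $A$ forces the kernel to vanish (inclusion in $\Hom_\A(\Y,\Y^\perp)=0$), and the cokernel to vanish by an $\Ext^1$ splitting argument identical to the one above.

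For the converse of (2), given an equivalence $j^*\colon \Y^\perp \to \A/\Y$ with quasi-inverse $F$, set $j_* := \iota\circ F$ where $\iota\colon\Y^\perp\hookrightarrow\A$ is the inclusion. The adjointness $j^*\dashv j_*$ follows from the observation that the computation in part (1) only required the \emph{target} to lie in $\Y^\perp$: for arbitrary $A\in\A$ and any $N\in\Y^\perp$, one still has $\Hom_\A(A,N)\cong \Hom_{\A/\Y}(j^*A, j^*N)$. Applying this with $N=j_*X$ gives $\Hom_\A(A,j_*X)\cong\Hom_{\A/\Y}(j^*A, j^*FX)\cong \Hom_{\A/\Y}(j^*A, X)$. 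Part (3) is obtained by applying (2) to the opposite category $\A^{\op}$, under which ${}^\perp\Y$ and colocalising transform into $\Y^\perp$ and localising, respectively, and $j_!$ plays the role of $j_*$. The most delicate technical point, and the main obstacle, is the $\Ext^1$-vanishing half of the identification $\Image\,j_* = \Y^\perp$, which relies on the naturality square for $\eta$ and the triangular identity to produce the required splitting.
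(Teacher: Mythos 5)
The paper does not prove this statement — it is quoted from Geigle--Lenzing — so there is no internal proof to compare against; judged on its own, your argument is correct and is essentially the standard one. Parts (1) and the converse of (2) via the colimit formula (\ref{homs}) are fine, and the duality for (3) is legitimate since the Serre quotient construction is self-dual. The only compressed spot is the final step of $\Y^\perp\subseteq\Image j_*$: after splitting $0\to A\to j_*j^*A\to \Coker\eta_A\to 0$ you should add that the resulting section embeds $\Coker\eta_A\in\Y$ as a subobject of $j_*j^*A\in\Y^\perp$ (using the inclusion $\Image j_*\subseteq\Y^\perp$ already established), so this section, hence $\Coker\eta_A$, is zero; the splitting alone does not kill the cokernel.
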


Localisations and colocalisations with respect to a torsion pair $(\Xcal,\Ycal)$ in $\Acal$ 
first appeared in \cite{Gabriel} (see also \cite{Col}, \cite{St}). 
As in \cite{Col}, we say that $(\X,\Y)$ is \textbf{strongly hereditary}, (respectively \textbf{strongly cohereditary}), if there is a functor $\textsf{L}\colon\Acal\lxr\Acal$ (respectively, $\textsf{C}\colon\Acal\lxr\Acal$), the \textbf{localisation}, (respectively the \textbf{colocalisation}) \textbf{functor with respect to $(\X,\Y)$}, and a natural transformation $\phi\colon \iden_\Acal\lxr \textsf{L}$ (respectively, $\psi\colon \textsf{C}\lxr \iden_\Acal$) such that, for all $A$ in $\A$$\colon$
\begin{enumerate}
\item $\Ker\phi_A, \Coker\phi_A\in\X$ (respectively, $\Ker\psi_A, \Coker\psi_A\in\Y$);

\item $\textsf{L}(A)\in\Y$ (respectively, $\textsf{C}(A)\in\X$);

\item $\textsf{L}(A)$ is \textbf{$\X$-divisible} (respectively, $\textsf{C}(A)$ is \textbf{$\Y$-codivisible}), meaning that $\Hom_\Acal(-,\textsf{L}(A))$ (respectively, $\Hom_\Acal(\textsf{C}(A),-)$) is exact on exact sequences, $0\rightarrow K\rightarrow M\rightarrow N\rightarrow 0$, with $N\in\X$ (respectively, $K\in\Y$).
\end{enumerate}
The embedding in $\A$ of $\Image \textsf{L}$, the \textbf{Giraud subcategory} of $\A$ associated with $(\X,\Y)$, admits an exact left adjoint such that $\textsf{L}$ is given by the composition of the functors and $\phi$ is the unit of this adjunction (\cite{Col}). Also, $\Image \textsf{L}$ is the full subcategory of $\X$-divisible objects of $\Y$. Dual statements holds for $\Image \textsf{C}$. 
\begin{defn}
We say that a \textsf{TTF}-triple $(\X,\Y,\Z)$ in $\Acal$ is \textbf{strong} if $(\X,\Y)$ is strongly cohereditary and $(\Y,\Z)$ is strongly hereditary.
\end{defn}

If $\A$ has enough projectives (respectively, injectives), then by \cite[Theorem 1.8-1.8$^*$]{Col}, a torsion pair is cohereditary (respectively, hereditary) if and only if it is strongly cohereditary (respectively, strongly hereditary)

\subsection{Recollements} We now discuss recollements of abelian categories (\cite{BBD, Pira, Kuhn}).

\begin{defn}\label{rec}
A \textbf{recollement} of an abelian category $\Acal$ by abelian categories $\B$ and $\C$, denoted by $\textsf{R}(\B,\A,\C)$, is a diagram of additive functors as follows,   satisfying the conditions below.
\[\textsf{R}(\B,\A,\C)\colon\ \ \ \ \  \xymatrix@C=0.5cm{
\B \ar[rrr]^{i_*} &&& \A \ar[rrr]^{j^*}  \ar @/_1.5pc/[lll]_{i^*}  \ar
 @/^1.5pc/[lll]_{i^!} &&& \C
\ar @/_1.5pc/[lll]_{j_!} \ar
 @/^1.5pc/[lll]_{j_*}
 } 
 \]
\begin{enumerate}
\item $(j_!,j^*,j_*)$ and $(i^*,i_*,i^!)$ are adjoint triples;

\item The functors $i_*$, $j^!$, and $j_*$ are fully faithful;

\item $\Image{i_*}=\Ker{j^*}$. 
\end{enumerate}
\end{defn}

Throughout, we fix a recollement $\textsf{R}(\B,\A,\C)$ as in Definition \ref{rec}. 
The next proposition collects some properties of $\textsf{R}(\B,\A,\C)$ that can be easily derived from the definition (see for example \cite{Pira}, \cite{recol}).

\begin{prop}\label{properties}
The following hold for a recollement $\textsf{R}(\B,\A,\C)$.
\begin{enumerate}
\item The functors $i_*$ and $j^*$ are exact.
\item $i^*j_!=0=i^!j_*$, $j^*j_!=\iden_\Ccal=j^*j_*$ and $i^!i_*=\iden_\Bcal=i^*i_*$.
\item $\Bcal$ is a Serre subcategory of $\Acal$ and $j^*$ is naturally equivalent to the quotient functor $\Acal\lxr \Acal/\Bcal$. In particular, we have that $\Ccal\cong \Acal/\Bcal$ and that  $\Bcal$ is bilocalising.
\item For all $A$ in $\Acal$, there are $B$ and $B'$ in $\Bcal$ such that the units and counits of the adjunctions induce the following exact sequences
$$0\rightarrow i_*(B)\rightarrow j_!j^*(A)\rightarrow A\rightarrow i_*i^*(A)\rightarrow 0,$$ $$0\rightarrow i_*i^!(A)\rightarrow A\rightarrow j_*j^*(A)\rightarrow i_*(B')\rightarrow 0.$$
\end{enumerate}
\end{prop}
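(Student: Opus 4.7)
My plan is to verify the four items in order, as each one feeds into the next.

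For (1), both functors under scrutiny admit adjoints on both sides: $i_*$ lies between $i^*$ and $i^!$, and $j^*$ lies between $j_!$ and $j_*$. A functor that is simultaneously a left and a right adjoint preserves all limits and colimits, hence is exact. I would state this and move on.

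For (2), I would handle the fully-faithful identities first. That $i_*$ is fully faithful is equivalent, by the standard criterion for a left adjoint (resp.\ right adjoint) to be fully faithful, to the counit $i^*i_*\Rightarrow \iden_\Bcal$ (resp.\ the unit $\iden_\Bcal\Rightarrow i^!i_*$) being an isomorphism; similarly fully faithfulness of $j_!$ gives $j^*j_!=\iden_\Ccal$, and of $j_*$ gives $j^*j_*=\iden_\Ccal$. For the vanishing $i^*j_!=0$, I would fix $C\in\Ccal$ and any $B\in\Bcal$, then compute
\[
\Hom_\Bcal(i^*j_!(C),B)\cong\Hom_\Acal(j_!(C),i_*(B))\cong\Hom_\Ccal(C,j^*i_*(B))=0,
\]
since $\Image i_*=\Ker j^*$ forces $j^*i_*=0$. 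Because this holds for every $B$, the functor $\Hom_\Bcal(i^*j_!(C),-)$ is zero, hence $i^*j_!(C)=0$. The identity $i^!j_*=0$ is dual.

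For (3), since $j^*$ is exact and $\Bcal=\Ker j^*$, the subcategory $\Bcal$ is closed under subobjects, quotients and extensions, hence is a Serre subcategory. By the universal property of the Gabriel quotient, $j^*$ factors as $j^*=\bar j^*\circ q$, where $q\colon\Acal\lxr\Acal/\Bcal$ is the canonical functor. To see $\bar j^*$ is an equivalence I would invoke \thmref{loc coloc}: the right adjoint $j_*$ is fully faithful with $j^*j_*=\iden_\Ccal$, showing that $\Bcal$ is localising and that $j_*$ is (naturally equivalent to) the canonical right adjoint of $q$; symmetrically, $j_!$ being fully faithful shows $\Bcal$ is colocalising. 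In particular $\Bcal$ is bilocalising and $\Ccal\simeq \Acal/\Bcal$.

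For (4), I would produce the first sequence from the counit $\epsilon_A\colon j_!j^*(A)\lxr A$ of the adjunction $(j_!,j^*)$ and the unit $\eta_A\colon A\lxr i_*i^*(A)$ of $(i^*,i_*)$. Applying the exact functor $j^*$ to the kernel–cokernel decomposition of $\epsilon_A$, and using $j^*j_!=\iden_\Ccal$ together with (2), the kernel and cokernel of $\epsilon_A$ both lie in $\Ker j^*=\Bcal$. Applying the right-exact functor $i^*$ to the resulting short exact sequence $j_!j^*(A)\epic \Image\epsilon_A\monic A\epic \Coker\epsilon_A\to 0$ and using $i^*j_!=0$ from (2), I get $i^*(A)\cong i^*(\Coker\epsilon_A)$; since $\Coker\epsilon_A$ lies in $\Bcal$ and $i_*i^*=\iden$ on $\Bcal$, this produces a canonical isomorphism $\Coker\epsilon_A\cong i_*i^*(A)$, and a short diagram chase identifies the composite $A\epic\Coker\epsilon_A\xrightarrow{\sim} i_*i^*(A)$ with the unit $\eta_A$. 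Taking $B=i^*(\Ker\epsilon_A)$ (equivalently $i^!(\Ker\epsilon_A)$) gives the first four-term exact sequence. The second one is obtained dually from the unit $A\lxr j_*j^*(A)$ and the counit $i_*i^!(A)\lxr A$, using left exactness of $i^!$ and $i^!j_*=0$. The main subtlety to watch is exactness at the middle term $A$, which rests on the identification of $\Coker\epsilon_A$ with $i_*i^*(A)$ via $\eta_A$; once that naturality check is in place the rest is bookkeeping.
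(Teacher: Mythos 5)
The paper offers no proof of this proposition --- it is presented as a list of properties that ``can be easily derived from the definition'', with pointers to \cite{Pira} and \cite{recol} --- so there is no in-text argument to compare yours against line by line. On its own terms, your treatment of (1), (2) and (4) is correct and is the standard one: (1) because $i_*$ and $j^*$ each have adjoints on both sides; (2) via the fully-faithful-iff-(co)unit-iso criterion together with the Yoneda computation $\Hom_\Bcal(i^*j_!C,B)\cong\Hom_\Ccal(C,j^*i_*B)=0$; and (4) by applying the exact functor $j^*$ and the right exact $i^*$ (dually, left exact $i^!$) to the kernel--cokernel factorisation of the counit $\epsilon_A$. Your identification of $A\twoheadrightarrow\Coker\epsilon_A\cong i_*i^*(A)$ with the unit $\eta_A$ is exactly the naturality check needed, and it goes through because $\eta$ is invertible on objects of $i_*(\Bcal)$.

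The one genuine soft spot is in (3). As written, the deduction that $\Bcal$ is localising --- i.e.\ that the \emph{canonical} quotient $q\colon\Acal\to\Acal/\Bcal$ has a right adjoint --- from the existence and full faithfulness of $j_*$ is circular: the only candidate for a right adjoint of $q$ is $j_*\bar{j}^*$, and verifying $\Hom_\Acal(M,j_*\bar{j}^*X)\cong\Hom_{\Acal/\Bcal}(qM,X)$ already requires $\bar{j}^*$ to be fully faithful, which is the point at issue; Theorem~\ref{loc coloc} concerns the canonical quotient functor and cannot be invoked before that identification is made. The fix is standard: prove directly that $\bar{j}^*$ is an equivalence. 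It is exact with trivial kernel, hence faithful; it is dense since $j^*j_*\cong\iden_\Ccal$; and it is full because any $g\colon j^*M\to j^*N$ corresponds under adjunction to $\tilde{g}\colon M\to j_*j^*N$, while the unit $\eta_N\colon N\to j_*j^*N$ has kernel and cokernel in $\Bcal$ (apply $j^*$ and use $j^*j_*\cong\iden_\Ccal$), hence becomes invertible in $\Acal/\Bcal$, so that $q(\eta_N)^{-1}q(\tilde{g})$ is a preimage of $g$. Note this uses precisely the observation underlying your second sequence in (4), so that piece should logically be established before, or independently of, (3). Once $\bar{j}^*$ is known to be an equivalence, $j_!$ and $j_*$ transport to adjoints of $q$ and the bilocalising claim follows.
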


A full subcategory of $\Acal$ is said to be \textbf{bireflective} if it is reflective and coreflective. 
For a recollement $\textsf{R}(\B,\A,\C)$, $i_*(\B)$ is a bireflective subcategory of $\A$. 

\begin{rem}\label{rem Ker}
Any \textsf{TTF}-class is clearly bireflective. Also, any bilocalising subcategory $\Ycal$ is bireflective and, thus, it induces a recollement of $\Acal$. Indeed, let $j_!$ be the left adjoint and $j_*$ be the right adjoint of the quotient functor $j^*\colon\Acal\lxr\Acal/\Ycal$. We can define functors $i^*\colon \Acal\lxr \Ycal$ and $i^!\colon\Acal\lxr \Ycal$ by setting $i^*(A)$ to be the cokernel of the counit of the adjunction $(j_!,j^*)$ at $A$ and $i^!(A)$ the kernel of the unit of the adjunction $(j_*,j^*)$ at $A$. It then follows that  $i^*$ is a left adjoint  and $i^!$ is a right adjoint of the inclusion $i_*\colon\Ycal\lxr\Acal$. 
\end{rem}

We end this subsection with a widely studied example of a recollement. 

\begin{exam}\cite{Kuhn}\cite{PS}\cite{recol}\label{example}
Let $A$ be a ring and $e$ an idempotent element of $A$. There is
a recollement $\textsf{R}(\Mod{A/AeA},\Mod{A},\Mod{eAe})$ of $\Mod{A}$, as in the diagram below, which is said to be \textbf{induced by the idempotent element $e$}. 
\[
\xymatrix@C=0.5cm{
\Mod{A/AeA} \ar[rrr]^{\inc} &&& \Mod{A} \ar[rrr]^{\Hom_A(eA,-) \ } \ar
@/_1.5pc/[lll]_{-\otimes_AA/AeA}  \ar
 @/^1.5pc/[lll]^{\Hom_A(A/AeA,-)} &&& \Mod{eAe}.
\ar @/_1.5pc/[lll]_{-\otimes_{eAe}eA} \ar
 @/^1.5pc/[lll]^{\Hom_{eAe}(Ae,-)}
 } 
 \]
\end{exam}

\subsection{Ring Epimorphisms}
We fix $A$ a unitary ring. To study recollements of $\Mod{A}$ we look at its bireflective subcategories, which are classified by epimorphisms in the category of unitary rings. A ring homomorphism $f\colon A\lxr B$ is an epimorphism if and only if the restriction functor $f_*\colon \Mod{B}\lxr \Mod{A}$ is fully faithful (\cite{St}).  Theorem \ref{Gab de la} states that all bireflective subcategories of $\Mod{A}$ arise in this way. Two ring epimorphisms $f\colon A\lxr B$ and $g\colon A\lxr C$ lie in the same \textbf{epiclass of $A$}, if there is a ring isomorphism  $h\colon B\lxr C$ such that $g=hf$.

\begin{thm}\cite[Theorem 1.6.3]{I}\cite{Geigle-Lenzing}\cite[Theorem 1.2]{GdP}\label{Gab de la}   There is a bijection between epiclasses of $A$ and bireflective subcategories of $\Mod{A}$, defined by assigning to an epimorphism $f\colon A\lxr B$, the subcategory $\Xcal_B:=\Image{f_*}$. Moreover, a full subcategory $\Xcal$ of $\Mod{A}$ is bireflective if and only if it is closed under products, coproducts,  kernels and cokernels.
\end{thm}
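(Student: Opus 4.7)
Let $f\colon A\lxr B$ be a ring epimorphism. The recalled characterisation makes $f_*\colon\Mod{B}\lxr\Mod{A}$ fully faithful, so $\Xcal_B=\Image f_*$ is equivalent to $\Mod{B}$ and the inclusion $\Xcal_B\lxr\Mod{A}$ identifies with $f_*$. The functors $-\otimes_A B$ and $\Hom_A(B,-)$ are respectively a left and a right adjoint of $f_*$, so $\Xcal_B$ is bireflective. If $g=hf$ for a ring isomorphism $h\colon B\lxr C$, then $g_*=f_*\comp h_*$ with $h_*$ an equivalence, giving $\Image f_*=\Image g_*$; thus the assignment descends to epiclasses.

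\textbf{Step 2 (bireflective implies closure).} If $\Xcal$ is bireflective, its inclusion $i_*$ is simultaneously a right and a left adjoint, hence it preserves all limits and colimits that exist in $\Xcal$. Forming products, kernels, coproducts and cokernels of objects of $\Xcal$ via these adjoints produces objects of $\Xcal$ whose images under $i_*$ coincide with the ambient constructions in $\Mod{A}$; hence $\Xcal$ is closed under all four operations.

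\textbf{Step 3 (closure produces a ring epimorphism).} Conversely, assume $\Xcal$ is closed under products, coproducts, kernels and cokernels. Standard adjoint-functor-theorem arguments in the Grothendieck category $\Mod{A}$ supply both a left adjoint $\mL$ and a right adjoint to the inclusion $i_*$, so $\Xcal$ is bireflective. Set $B:=\mL(A)$ and $f:=\eta_A\colon A\lxr\mL(A)$, the unit of the reflection. Via the composite isomorphism $\End_{\Mod{A}}(\mL(A))\cong\Hom_A(A,\mL(A))\cong\mL(A)$ (the first from the adjunction, the second by evaluation at $1$), transport the ring structure of $\End_{\Mod{A}}(\mL(A))^{\op}$ onto $\mL(A)$; concretely, for $y\in\mL(A)$ let $\rho_y$ be the unique $A$-endomorphism of $\mL(A)$ with $\rho_y(\eta_A(1))=y$, and set $yz:=\rho_y(z)$. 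This makes $B$ a ring with unit $\eta_A(1)$, and $A$-linearity of $\rho_y$ forces $\eta_A$ to be a ring homomorphism. For every $M\in\Xcal$ the adjunction isomorphism $M\cong\Hom_A(\mL(A),M)$ equips $M$ with a canonical right $B$-module structure extending its $A$-structure along $f$, and one checks that the restriction functor $f_*\colon\Mod{B}\lxr\Mod{A}$ is naturally isomorphic to $i_*$. Since $i_*$ is fully faithful, so is $f_*$, whence $f$ is a ring epimorphism with $\Xcal_B=\Xcal$.

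\textbf{Step 4 (injectivity and main obstacle).} If $f\colon A\lxr B$ and $g\colon A\lxr C$ are ring epimorphisms with $\Xcal_B=\Xcal_C$, then both pairs $(B,f)$ and $(C,g)$ solve the universal property of the reflection of $A$ onto this common subcategory, so there is a unique $A$-module isomorphism $h\colon B\lxr C$ with $g=hf$; since the ring structures on $B$ and $C$ are both manufactured from the same reflection by the recipe of Step 3, $h$ is automatically a ring isomorphism, placing $f$ and $g$ in the same epiclass. The principal obstacle I foresee is precisely the construction in Step 3: pinning down a ring structure on $\mL(A)$ whose unit $\eta_A$ is a ring homomorphism and for which $f_*$ is naturally isomorphic to $i_*$. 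The argument is not deep but requires disciplined tracking of the unit of the adjunction, the $A$-action on $\mL(A)$, and the endomorphism ring; once it is in place, the closure characterisation, injectivity, and surjectivity of the bijection all follow formally.
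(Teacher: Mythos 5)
The paper does not actually prove this statement --- it is quoted from Iyama, Geigle--Lenzing and Gabriel--de la Pe\~{n}a --- so there is no internal proof to compare with, and I assess your argument on its own terms. Steps 1, 2 and 4 are correct, and the second half of Step 3 (the ring structure on $\mL(A)$ transported through $\End_A(\mL(A))\cong\Hom_A(A,\mL(A))\cong\mL(A)$, the check that $\eta_A$ is a ring homomorphism, and the observation in Step 4 that for a ring epimorphism this transported multiplication recovers the given one, so that the comparison isomorphism is automatically a ring map) is exactly the right bookkeeping.

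The genuine gap is the first sentence of Step 3: ``standard adjoint-functor-theorem arguments'' do not supply the reflection. The closure hypotheses give that $\Xcal$ is complete and cocomplete and that $i_*$ preserves all limits and colimits, but that is not enough for either form of the adjoint functor theorem: the General AFT needs a solution set at each $M$, and the Special AFT needs $\Xcal$ itself to have a cogenerating set (for the left adjoint) or a generating set (for the right adjoint); none of these is available a priori from closure under products, coproducts, kernels and cokernels. Concretely, every $g\colon M\lxr X$ with $X\in\Xcal$ does factor through the smallest subobject of $X$ lying in $\Xcal$ and containing $g(M)$ (this exists, being an intersection of kernels of maps between objects of $\Xcal$), but there is no a priori cardinality bound on these ``$\Xcal$-closures'' as $X$ varies, so this does not produce a solution set; and building the reflection of $A$ directly as the $\Xcal$-closure of $A/\bigcap\ker g$ inside one fixed product of objects of $\Xcal$ leaves you having to prove an extension property (that every map $A\lxr X$ extends to that closure) which is essentially equivalent to the existence statement you are after. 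This is the technical core of the theorem and is precisely what the cited sources establish by other, more explicit means. A secondary, much smaller omission: once $B=\mL(A)$ and $f=\eta_A$ are in hand, concluding $\Image f_*=\Xcal$ also requires checking that every right $B$-module restricts to an object of $\Xcal$; this follows from a free presentation $B^{(J)}\lxr B^{(I)}\lxr N\lxr 0$ together with closure of $\Xcal$ under coproducts and cokernels, and is worth writing down.
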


\begin{rem}\label{Tor}
Some properties of a ring epimorphism $f\colon A\lxr B$ can be seen from the bireflective subcategory $\Xcal_B$. For example, $\Xcal_B$ is extension-closed if and only if $\Tor_1^A(B,B)=0$ (\cite{Geigle-Lenzing}).
\end{rem}

Given a ring epimorphism $f\colon A\lxr B$, let $\psi_M\colon M\lxr M\otimes_A B$ denote the unit of the adjoint pair $(-\otimes_AB,f_*)$ at a right $A$-module $M$ (given by $\psi_M(m)=m\otimes 1_B$, for all $m$ in $M$). Note that $\psi_N$ is an isomorphism for all $N$ in $\Xcal_B$. In fact, $\psi_M$ is the $\Xcal_B$-reflection of the right $A$-module $M$. In particular, $f\colon A\lxr B$, regarded as a morphism in $\Mod{A}$, is the $\Xcal_B$-reflection $\psi_A$. 

\section{\textsf{TTF}-triples in abelian categories}
In this section we discuss some aspects of \textsf{TTF}-triples in abelian categories. 

We start with an adaptation of the classical bijection between torsion pairs and idempotent radicals. This will, later, yield a proof for Jans' correspondence (Theorem \ref{Jans}). 

\begin{prop}\label{gen Jans}
Let $\Acal$ be a well-powered, complete and cocomplete abelian category $\Acal$. There are bijections between the following classes$\colon$
\begin{enumerate}
\item \textsf{TTF}-triples in $\Acal$;
\item Left exact radical functors $F\colon \Acal\lxr\Acal$ preserving products;
\item Right exact coradical functors $G\colon \Acal\lxr\Acal$ preserving coproducts.
\end{enumerate}
\end{prop}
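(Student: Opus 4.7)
The plan is to leverage the classical Dickson bijection (recalled in the excerpt) between torsion pairs in $\Acal$, idempotent radicals, and idempotent coradicals, and to identify which such functors correspond to \textsf{TTF}-triples rather than just to ordinary torsion pairs. To a \textsf{TTF}-triple $(\X,\Y,\Z)$ I would associate the radical $F:=\textsf{i}_\Y\textsf{R}_\Y$ coming from the torsion pair $(\Y,\Z)$ and, dually, the coradical $G:=\textsf{i}_\Y\textsf{L}_\Y$ coming from $(\X,\Y)$; both have essential image equal to the \textsf{TTF}-class $\Y$. Note that the idempotency hypothesis absent from (ii) and (iii) is in fact automatic under left exactness: applying $F$ to $0\to F(A)\to A\to \Coker\mu_A\to 0$ together with $F(\Coker\mu_A)=0$ forces $F(\mu_A)$ to be an isomorphism, and by naturality of $\mu$ one has $F(\mu_A)=\mu_{F(A)}$, so $F$ is idempotent. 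The dual observation handles $G$.

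For the forward direction, $F$ is left exact because $(\Y,\Z)$ is hereditary: $\Y$, being a torsion-free class, is closed under subobjects, and this is the classical criterion for the associated radical to be left exact. The criterion itself can be verified directly by applying $F$ to $0\to N\to M\to M/N\to 0$ with $M\in\Y$ and using that the mono $\mu_{M/N}\colon F(M/N)\to M/N$ forces $\ker F(M\to M/N)=\ker(M\to M/N)=N$, so $F(N)=N$. Preservation of products holds because $\textsf{R}_\Y$, being a right adjoint, preserves all limits, while the inclusion $\textsf{i}_\Y$ preserves products as $\Y$ is closed under products in $\Acal$. The analogous assertions for $G$ are formally dual.

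Conversely, given a left exact (hence idempotent) radical $F$ preserving products, Dickson's theorem yields a torsion pair $(\Y,\Z)$ with $\Y=\Image F$; to promote this to a \textsf{TTF}-triple I need to exhibit a torsion pair $(\X,\Y)$, which, by Proposition \ref{Dickson} together with the running hypotheses on $\Acal$, reduces to showing that $\Y$ is closed under subobjects, extensions and products. Extension-closure is automatic. Subobject-closure is exactly the hereditary/left-exact equivalence invoked above. Product-closure is immediate: $F(M_i)\cong M_i$ for $M_i\in\Y$ yields $F(\prod M_i)\cong \prod F(M_i)\cong \prod M_i$, hence $\prod M_i\in\Image F=\Y$. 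Mutual inversion of the two assignments is then guaranteed by the uniqueness clause of Dickson's bijection, and the statement for coradicals is formally dual.

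The step I expect to be most delicate is the hereditary/left-exact equivalence in the abstract abelian setting, but this is a classical diagram chase relying only on $\mu$ and the universal property of kernels; the remainder of the argument is a direct combination of Proposition \ref{Dickson} with standard adjointness book-keeping.
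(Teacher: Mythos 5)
Your proposal is correct and takes essentially the same route as the paper: both assign to a \textsf{TTF}-triple the idempotent radical $\textsf{i}_{\Ycal}\textsf{R}_{\Ycal}$ of the hereditary pair $(\Ycal,\Zcal)$, and both recover the triple from $F$ by applying Dickson's closure criterion to $\Ycal_F=\Image F$ together with preservation of products. The only cosmetic divergence is that the paper obtains left exactness and product preservation directly from bireflectivity of $\Ycal$ (the exact inclusion composed with the right adjoint $\textsf{R}_{\Ycal}$), whereas you invoke the classical hereditary/left-exact equivalence; note only that the inline verification you sketch there actually establishes the converse implication (left exactness forces closure under subobjects), which is the direction you need later for $\Ycal_F$, not the one needed at that point of the forward construction.
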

\begin{proof}
We show a bijection between (i) and (ii) (a bijection with (iii) can be obtained dually). Let $(\X,\Y,\Z)$ be a \textsf{TTF}-triple in $\Acal$. 
By Remark \ref{rem Ker}, $\Y$ is bireflective and, hence, $\textsf{i}_\Ycal$ of $\Y$ in $\A$ admits a left adjoint $\textsf{L}_\Y$ and a right adjoint $\textsf{R}_\Ycal$. Thus, $\textsf{i}_\Ycal$ is exact and $\textsf{i}_{\Y} \textsf{R}_{\Y}$ is a left exact radical functor preserving products. We define a correspondence$\colon$
\[
\xymatrix{
  \Phi\colon (\X,\Y,\Z) \ \ar@{|->}[r] & \ ( \textsf{i}_{\Y}\textsf{R}_{\Y}\colon \A\lxr \A ).    }
\]
Given a left exact radical functor $F\colon \Acal\lxr \Acal$ preserving products, it is easy to see that $F$ is idempotent and $(\Ycal_F:=\{A\in \A \ | \ F(A)=A \},\Y^\circ_F=\Ker F)$ is a hereditary torsion pair (see also \cite[Proposition VI.1.7]{St}). Since $F$ preserves products, $\Y_F$ is closed under products and thus, Proposition \ref{Dickson} shows that $\Y_F$ is a \textsf{TTF}-class. Hence, we can associate a \textsf{TTF}-triple to $F$ as follows.
\[
\xymatrix{
\Psi\colon F \ \ar@{|->}[r] & \  
 (^\circ{\Y_F},\Y_F,{\Y_F}^\circ) }
\]
Finally, it easily follows that $\Phi$ and $\Psi$ are inverse correspondences.
\end{proof}

Now we will identify \textsf{TTF}-classes which are localising and colocalising.

\begin{lem}\label{loc coloc comparison}
Let $(\X,\Y,\Z)$ be a \textsf{TTF}-triple in $\Acal$. Then $\Ycal$ is a localising subcategory of $\Acal$ if and only if $(\Ycal,\Z)$  is a strongly hereditary torsion pair. Dually, $\Ycal$ is a colocalising subcategory if and only if $(\X,\Y)$ is a strongly cohereditary torsion pair.
\end{lem}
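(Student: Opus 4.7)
The plan is to prove the first equivalence; the second follows dually. The key observation is that the Giraud subcategory associated with $(\Y,\Z)$ (when it is strongly hereditary) coincides with $\Y^\perp$, which is exactly the essential image of the right adjoint to $j^*$ when $\Y$ is localising; this reduces both directions to an application of Theorem \ref{loc coloc}.

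Before either direction, I would verify that the $\Y$-divisible objects of $\Z$ are precisely those of $\Y^\perp$. The inclusion $\Y^\perp\subseteq\Z$ is immediate from the definitions. Given $A\in\Z$ and any short exact sequence $0\to K\to M\to N\to 0$ with $N\in\Y$, the long exact $\Ext$-sequence (using $\Hom_\Acal(N,A)=0$) reduces $\Y$-divisibility to the vanishing of $\Ext^1_\Acal(Y,A)$ for all $Y\in\Y$: applying divisibility to any extension $0\to A\to E\to N\to 0$ with $K=A$ shows that $\iden_A$ lifts and the extension splits, while the converse is read off from the same sequence. Combined with the description of $\Image\textsf{L}$ recalled in the excerpt, this identifies the Giraud subcategory of $(\Y,\Z)$ with $\Y^\perp$ whenever $(\Y,\Z)$ is strongly hereditary.

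For the forward direction, assume $\Y$ is localising and let $j_*$ be the right adjoint of $j^*$. Set $\textsf{L}:=j_*j^*$ with unit $\phi\colon\iden_\Acal\to\textsf{L}$. Since $j_*$ is fully faithful, the counit $j^*j_*\to\iden_{\Acal/\Y}$ is an isomorphism, and a triangle identity then forces $j^*(\phi_A)$ to be an isomorphism; the exactness of $j^*$ yields $\Ker\phi_A,\Coker\phi_A\in\Ker j^*=\Y$. By Theorem \ref{loc coloc}(ii), $\Image j_*=\Y^\perp\subseteq\Z$, so $\textsf{L}(A)\in\Z$, and the previous paragraph ensures that $\textsf{L}(A)$ is $\Y$-divisible. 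Thus $(\Y,\Z)$ is strongly hereditary.

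Conversely, if $(\Y,\Z)$ is strongly hereditary with localisation functor $\textsf{L}$ and unit $\phi$, then for any $A\in\Acal$ the kernel and cokernel of $\phi_A$ lie in $\Y$, so applying the exact quotient $j^*$ yields $j^*(A)\cong j^*(\textsf{L}(A))$ with $\textsf{L}(A)\in\Y^\perp$. Hence the fully faithful embedding $j^*\colon\Y^\perp\to\Acal/\Y$ of Theorem \ref{loc coloc}(i) is essentially surjective, and part (ii) of the same theorem concludes that $\Y$ is localising. The main obstacle is the $\Ext^1$-manipulation identifying $\Y$-divisibility in $\Z$ with membership in $\Y^\perp$; once this is secured, the rest is a routine assembly of Theorem \ref{loc coloc} with standard facts about adjunctions.
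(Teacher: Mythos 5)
Your proof is correct. The implication ``localising $\Rightarrow$ strongly hereditary'' is essentially the paper's argument: set $\textsf{L}=j_*j^*$, use the triangle identity and exactness of $j^*$ to place $\Ker\phi_A$ and $\Coker\phi_A$ in $\Ycal$, and deduce divisibility of $j_*j^*(A)$ from $\Ext^1$-vanishing. For the converse you take a genuinely (if mildly) different route. The paper works with the Giraud subcategory $\G=\Image\textsf{L}$ directly: it shows $j^*\textsf{i}\colon\G\lxr\Acal/\Ycal$ is dense via $j^*\phi$ and fully faithful by computing morphisms in the quotient through the colimit formula (\ref{homs}) together with $\Ycal$-divisibility, and then produces the right adjoint of $j^*\cong(j^*\textsf{i})\textsf{l}$ by composing the adjoints of the two factors. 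You instead prove up front that the $\Ycal$-divisible objects of $\Zcal$ are exactly ${\Ycal}^\perp$ --- a clean Yoneda-$\Ext^1$ computation that the paper only records after the fact, in Remark \ref{Giraud, Co-Giraud} --- which gives $\Image\textsf{L}\subseteq{\Ycal}^\perp$; density of $j^*$ restricted to ${\Ycal}^\perp$ then combines with parts (i) and (ii) of Theorem \ref{loc coloc} to yield the right adjoint. Your version trades the explicit $\Hom$ computation in $\Acal/\Ycal$ for the (routine) six-term exact sequence of Yoneda $\Ext$, and it has the small advantage of isolating the identification $\G={\Ycal}^\perp$ as a reusable statement rather than extracting it from the proof afterwards; both arguments are sound and of comparable length.
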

\begin{proof}
We prove the first statement (the second one is dual). 
Suppose that the torsion pair $(\Y,\Z)$ is strongly hereditary. Let $\textsf{L}$ and $\phi\colon\iden_\A\lxr \textsf{L}$ be the associated localisation functor and natural transformation, respectively, and $j^*\colon\A\lxr \A/\Y$ the quotient functor. Recall that $\textsf{L}=\textsf{i}\textsf{l}$, where $\textsf{i}\colon \G\lxr \A$ is the inclusion functor of the Giraud subcategory $\G:=\Image \textsf{L}$ in $\A$ and $\textsf{l}$ is its left adjoint. We observe that $j^*\textsf{i}\colon\G\lxr \A/\Y$ is an equivalence. It is dense since it is easy to check that $j^*\phi$ is a natural equivalance between $j^*$ and $j^*\textsf{L}$.  On the other hand, it is fully faithful by the description (\ref{homs}) of morphisms in $\Acal/\Ycal$. Indeed, given $M$ and $N$ in $\G$, there are no subobjects of $N$ lying in $\Ycal$ and, for all subobjects $M'$ of $M$ such that $M/M'$ lies in $\Ycal$, $\Ycal$-divisibility guarantees that $\Hom_\Acal(M',N)=\Hom_\Acal(M,N)$. Since both $j^*\textsf{i}$ and $\textsf{l}$ have right adjoints then so does $(j^*\textsf{i})\textsf{l}\cong j^*\textsf{L}\cong j^*$.

Conversely, suppose that $\Ycal$ is a localising subcategory of $\Acal$ and let $j_*\colon \Acal/\Ycal\lxr \Acal$ be the right adjoint of the quotient $j^*\colon\Acal\lxr\Acal/\Ycal$. Given any object $A$ in $\Acal$, consider the map given by the unit of the adjunction $\phi_A\colon A\lxr j_*j^*(A)$. We will show that this is a localisation with respect to $(\Y,\Z)$. By Theorem \ref{loc coloc}, $j_*j^*(A)$ lies in $\Ycal^\perp$ and, thus, in $\Zcal=\Ycal^\circ$. Since $j^*$ is exact and $j^*j_*\cong \iden_{\Acal/\Ycal}$, it is also clear that $j^*(\Ker{\phi_A})=0=j^*(\Coker{\phi_A})$ and, thus, both $\Ker{\phi_A}$ and $\Coker{\phi_A}$ lies in $\Ycal$. Finally, since $\Ext^1_\Acal(Y,j_*j^*(A))=0$ for all $Y$ in $\Ycal$, $j_*j^*(A)$ is $\Y$-divisible, as wanted.
\end{proof}

\begin{rem}\label{Giraud, Co-Giraud}
In Lemma $3.2$ we in fact prove that, if $\Ycal$ is a localising subcategory and $j_*$ is the right adjoint of the quotient functor $j^*\colon \Acal\lxr \Acal/\Ycal$, then the Giraud subcategory $\G$ associated to the strongly hereditary torsion pair $(\Ycal,\Zcal)$ coincides with $\Ycal^\perp$. 
Similarly, the Co-Giraud subcategory $\Hcal$ of $\Acal$ induced by the strongly cohereditary torsion pair $(\X,\Y)$ (formed by the $\Y$-codivisible objects of $\X$) coincides with the subcategory $^\perp\Y$.
\end{rem}

\section{Recollements of abelian categories}
We define an equivalence relation on the class of recollements of $\Acal$. Although seemingly artificial, Lemma \ref{two eq} shows that  Definition \ref{def eq rec} is natural.

\begin{defn}\label{def eq rec}
Two recollements $\textsf{R}(\Bcal,\Acal,\Ccal)$ and $\textsf{R}(\Bcal',\Acal',\Ccal')$ 
are \textbf{equivalent} if there are equivalence functors $\Phi\colon \Acal\lxr \Acal'$ and $\Theta\colon \Ccal\lxr \Ccal'$ such that 
the diagram below commutes up to natural equivalence, i.e. there is a natural equivalence of functors between $\Theta j^*$ and $j^{*'}\Phi$.
\[
\xymatrix{
  \A \ar[d]_{\Phi}^{\simeq} \ar[r]^{j^{*}} & \C \ar[d]^{\Theta}_{\simeq}     \\
  \A'    \ar[r]^{j^{*'}} & \C'                  } 
\]
\end{defn}

\begin{lem}\label{two eq}
Two recollements $\textsf{R}(\Bcal,\Acal,\Ccal)$ and $\textsf{R}(\Bcal',\Acal',\Ccal')$ are equivalent if and only if there are exact equivalences $\Phi\colon\Acal\lxr\Acal'$, $\Psi\colon\Bcal\lxr\Bcal'$ and $\Theta\colon\Ccal\lxr\Ccal'$ such that the six diagrams associated to the six functors of the recollements commute up to natural equivalences.
\end{lem}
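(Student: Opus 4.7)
The ``if'' direction is immediate: the existence of $\Phi$ and $\Theta$ together with the commutativity of the $j^*$-diagram is precisely Definition~\ref{def eq rec}. The substance of the lemma is therefore the ``only if'' direction, and my plan is to \emph{produce} $\Psi$ from $\Phi$ and $\Theta$ and then deduce the remaining five commutativities by uniqueness of adjoints.

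First I would observe that any additive equivalence between abelian categories is automatically exact, so once $\Phi$, $\Psi$, $\Theta$ are constructed as equivalences there is nothing further to check for exactness. Next, using Proposition \ref{properties}(3) I identify $\Bcal$ with $\Ker j^*$ (via $i_*$) and $\Bcal'$ with $\Ker j^{*'}$ (via $i_*'$). From $\Theta j^* \simeq j^{*'}\Phi$ and the fact that $\Theta$ is an equivalence (hence conservative), $\Phi$ sends $\Ker j^*$ to $\Ker j^{*'}$; applying the same argument to a quasi-inverse $\Phi^{-1}$ shows that $\Phi$ restricts to an equivalence $\Ker j^*\xrightarrow{\simeq}\Ker j^{*'}$. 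Defining
\[
\Psi := (i_*')^{-1}\,\Phi\, i_* \colon \Bcal\lxr \Bcal',
\]
where $(i_*')^{-1}$ is a quasi-inverse of the (fully faithful) $i_*'\colon \Bcal'\to\Ker j^{*'}$, produces an equivalence with $i_*'\Psi \simeq \Phi i_*$. This secures the $i_*$-diagram.

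For the remaining four diagrams I invoke uniqueness of adjoints up to natural isomorphism. Since $\Phi$ and $\Theta$ are equivalences, $\Phi^{-1}$ is simultaneously a left and right adjoint of $\Phi$ (and similarly for $\Theta$, $\Psi$). Taking left adjoints in $\Theta j^*\simeq j^{*'}\Phi$ yields $j_!\Theta^{-1}\simeq \Phi^{-1} j_!'$, whence $\Phi j_!\simeq j_!'\Theta$; taking right adjoints gives $\Phi j_*\simeq j_*'\Theta$. Similarly, taking left adjoints in $i_*'\Psi\simeq\Phi i_*$ gives $\Psi i^*\simeq i^{*'}\Phi$, and taking right adjoints gives $\Psi i^!\simeq i^{!'}\Phi$. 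Thus all six diagrams commute up to natural equivalence.

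The only mildly delicate point is the construction of $\Psi$: one must take care that $\Phi$ really restricts to an equivalence between the kernel subcategories (rather than only mapping one into the other), which is where the conservativity of the equivalence $\Theta$ is used in both directions via $\Phi^{-1}$. Once this is in place, the rest is a routine bookkeeping with adjunctions.
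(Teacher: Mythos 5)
Your proof is correct and follows essentially the same route as the paper: obtain the $j_!$- and $j_*$-diagrams by taking left and right adjoints in $\Theta j^*\simeq j^{*'}\Phi$, define $\Psi$ by restricting $\Phi$ to $\Ker j^*\simeq\Bcal$, and deduce the $i^*$- and $i^!$-diagrams by the same adjunction argument. Your treatment of the restriction step (using conservativity of $\Theta$ and the quasi-inverse of $\Phi$ to see that $\Phi$ induces an equivalence of the kernel subcategories) is slightly more explicit than the paper's, but the argument is the same.
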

\begin{proof}
The condition in the lemma is clearly sufficient to get an equivalence of recollements. Conversely, suppose that we have an equivalence of recollements as in Definition \ref{def eq rec}. Recall that left (or right) adjoints of naturally equivalent functors are naturally equivalent. Thus, the left (or right) adjoints of $\Theta j^*$ and of $j^{*'}\Phi$ are equivalent. Such adjoints can be obtained by choosing a quasi-inverse of the equivalences $\Phi$ and $\Theta$. Using then the fact that the composition of two quasi-inverse functors is naturally equivalent to the identity functor, we easily get the desired natural equivalences between $\Phi j_!$ and $j_!^{'}\Theta$ and between $\Phi j_*$ and $j_*^{'}\Theta$. Up to equivalence, the two recollements are uniquely determined by these functors (see Remark \ref{rem Ker}). Let $\Psi$ be the restriction of $\Phi$ to $\Ker{j^*}$ (which is equivalent to $\Bcal$), where $j^*\colon \Acal\lxr\Ccal$. Then, the diagram associated with the inclusion functor $i_*\colon\Ker{j^*}\lxr\Acal$ clearly commutes and so do the other two, by an adjunction argument analogous to the one above.
\end{proof}

Equivalences of recollements whose outer equivalence functors ($\Psi$ and $\Theta$ in the lemma) are the identity functor have been studied in \cite{Pira}. Equivalences of recollements of triangulated categories also appear in \cite[Theorem 2.5]{PS}.

In the following theorem, we use the fact that structural properties of $\A$, such as $\textsf{TTF}$-triples, are preserved under equivalence.

\begin{thm}\label{classif}
Let $\Acal$ be an abelian category. The following are in bijection.
\begin{enumerate}
\item Equivalence classes of recollements of abelian categories $\textsf{R}(\Bcal,\Acal,\Ccal)$;
\item Strong \textsf{TTF}-triples $(\Xcal,\Ycal,\Zcal)$ in $\Acal$;
\item Bilocalising \textsf{TTF}-classes $\Ycal$ of $\Acal$;
\item Bilocalising Serre subcategories $\Ycal$ of $\Acal$.
\end{enumerate}
\end{thm}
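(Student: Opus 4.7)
The plan is to establish the bijections by pivoting around the bilocalising Serre subcategory, proving the chain $(iv) \Leftrightarrow (iii) \Leftrightarrow (ii)$ together with $(iv) \Leftrightarrow (i)$.

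For $(iii) \Leftrightarrow (iv)$ I would show the two classes coincide as subcategories of $\A$. Every \textsf{TTF}-class is closed under subobjects, quotients and extensions, hence is Serre, so ``bilocalising'' has the same meaning on both sides. For the non-trivial inclusion, let $\Y$ be a bilocalising Serre subcategory. Theorem~\ref{loc coloc} supplies both adjoints $j_!, j_*$ to the quotient $j^* \colon \A \lxr \A/\Y$, and Remark~\ref{rem Ker} then produces the adjoint triple $i^* \dashv i_* \dashv i^!$ for the inclusion $i_* \colon \Y \lxr \A$. Since $i_*$ admits both a left and a right adjoint, it preserves all limits and colimits that exist in $\Y$, so $\Y$ is closed under products and coproducts. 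Combined with the Serre conditions, Proposition~\ref{Dickson} now identifies $\Y$ as a \textsf{TTF}-class, and the orthogonality formulas $\X = {}^\circ \Y$, $\Z = \Y^\circ$ recover $\X$ and $\Z$ uniquely from $\Y$.

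Step $(ii) \Leftrightarrow (iii)$ is immediate from Lemma~\ref{loc coloc comparison}: a \textsf{TTF}-triple $(\X,\Y,\Z)$ is strong precisely when $(\X,\Y)$ is strongly cohereditary and $(\Y,\Z)$ is strongly hereditary, equivalently when $\Y$ is both colocalising and localising; the same orthogonality formulas give uniqueness of $\X$ and $\Z$. For $(i) \Leftrightarrow (iv)$ I would set up mutually inverse correspondences. A recollement $\textsf{R}(\B,\A,\C)$ produces, by Proposition~\ref{properties}(iii), the bilocalising Serre subcategory $\Image i_* = \Ker j^*$ of $\A$. In the other direction, Remark~\ref{rem Ker} turns a bilocalising Serre subcategory $\Y$ into a recollement $\textsf{R}(\Y,\A,\A/\Y)$. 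The round trip starting from $\Y$ yields $\Ker j^* = \Y$ by construction; the round trip starting from a recollement supplies the equivalence $\Psi = i_* \colon \B \lxr \Image i_*$ and, via Proposition~\ref{properties}(iii), an equivalence $\Theta \colon \C \lxr \A/\Ker j^*$ intertwining the two quotient functors with $\Phi = \iden_\A$ in the middle. Lemma~\ref{two eq} then upgrades this quotient-level equivalence to an equivalence of the full datum of six functors in the sense of Definition~\ref{def eq rec}.

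The main obstacle is that the last bijection has to descend to \emph{equivalence classes} of recollements, not recollements on the nose. One must verify that two equivalent recollements produce bilocalising Serre subcategories identified under the middle equivalence $\Phi \colon \A \lxr \A'$, and conversely that recollements manufactured from a given $\Y$ through Remark~\ref{rem Ker} are canonical only up to the choice of adjoints $j_!, j_*, i^*, i^!$. Both points reduce to Lemma~\ref{two eq} combined with the uniqueness of adjoints up to natural isomorphism; the careful-but-routine bookkeeping this requires is the delicate piece of the argument.
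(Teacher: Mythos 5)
Your overall architecture matches the paper's: Remark \ref{rem Ker} and Proposition \ref{properties}(iii) handle the passage between recollements and bilocalising Serre subcategories, Lemma \ref{loc coloc comparison} handles the equivalence between strong \textsf{TTF}-triples and bilocalising \textsf{TTF}-classes, and Lemma \ref{two eq} takes care of the descent to equivalence classes. The one step where you diverge is $(iv)\Rightarrow(iii)$, and that is where there is a genuine gap: you identify a bilocalising Serre subcategory $\Y$ as a \textsf{TTF}-class by checking closure under products, coproducts, subobjects, quotients and extensions and then invoking Proposition \ref{Dickson}. But Proposition \ref{Dickson} is only available when $\A$ is well-powered, complete and cocomplete, whereas Theorem \ref{classif} is stated for an arbitrary abelian category; in particular, the products and coproducts you want $\Y$ to be closed under need not even exist in $\A$. (Compare Proposition \ref{gen Jans}, where exactly these hypotheses are imposed because Dickson's criterion is used there.)

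The repair is to produce the two torsion pairs directly rather than through closure properties. Pass from $\Y$ to the recollement $\textsf{R}(\Y,\A,\A/\Y)$ of Remark \ref{rem Ker} first, and then use the exact sequences of Proposition \ref{properties}(iv): the Hom-orthogonality comes from the adjoint triple $(i^*,i_*,i^!)$, and applying the right exact functor $i^*$ to $0\to i_*(B)\to j_!j^*(A)\to A\to i_*i^*(A)\to 0$ shows $i^*(\Image{\mu_A})=0$, so that $0\to\Image{\mu_A}\to A\to i_*i^*(A)\to 0$ exhibits $(\Ker{i^*},\Y)$ as a torsion pair; dually, the sequence built from the unit of $(j^*,j_*)$ exhibits $(\Y,\Ker{i^!})$ as a torsion pair. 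This is precisely the route the paper takes. With this substitution the remainder of your argument, including the bookkeeping on equivalence classes via Lemma \ref{two eq} and the uniqueness of adjoints, goes through.
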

\begin{proof}
Let $\textsf{R}(\B,\A,\C)$ be a recollement of  $\A$. Firstly, $(\Ker{i^*},i_*(\Bcal),\Ker{i^{!}})$ is a \textsf{TTF}-triple in $\A$. The adjoint triple $(i^*,i_*,i^!)$ ensures that $$\Hom_\Acal(\Ker{i^*},i_*(\Bcal))=0=\Hom_\Acal(i_*(\Bcal),\Ker{i^{!}}).$$
Let $A$ be an object of $\A$. From Proposition \ref{properties}, we have an exact sequence
\[
\xymatrix{
0 \ar[r] & \Ker{\mu_A} \ar[r] & j_!j^*(A) \ar[r]^{\ \ \ \mu_A}
& A \ar[r] & i_*i^*(A) 
  \ar[r]^{ } & 0 }
\]
where $\Ker{\mu_A}$ lies in $i_*(\Bcal)$. 
Applying the right exact functor $i^*$ to the sequence, we see that $i^*(\Image{\mu_A})=0$. Thus, the sequence $$0\lxr \Image{\mu_A}\lxr A\lxr i_*i^*(A)\lxr 0$$ shows that $(\Ker i^*,i_*(\Bcal))$ is a torsion pair. Similarly, the exact sequence induced by $\nu_A\colon A\lxr j_*j^*(A)$ can be used to show that $(i_*(\Bcal),\Ker{i^!})$ is a torsion pair in $\B$.
Since $\textsf{R}(\Bcal,\Acal,\Ccal)$ is a recollement, $i_*(\Bcal)$ is a bilocalising subcategory of $\Acal$. Hence, by Lemma \ref{loc coloc comparison}, the torsion pairs $(\Ker{i^*},i_*(\Bcal))$ and $(i_*(\Bcal),\Ker{i^!})$ are, respectively, strongly cohereditary and strongly hereditary and $(\Ker{i^*},i_*(\Bcal),\Ker{i^!})$ is a strong \textsf{TTF}-triple.  Note that this $\textsf{TTF}$-triple depends only on the equivalence class of the recollement. Indeed, if $\textsf{R}(\B',\A',\C')$ is a recollement equivalent to $\textsf{R}(\Bcal,\Acal,\Ccal)$ via an equivalence $\Phi\colon\A'\lxr \A$, then the corresponding $\textsf{TTF}$-class of $\A$ associated to it is given by $\Phi i_*'(\B')$ which coincides, by Lemma \ref{two eq}, with $i_*(\B)$.

We construct now an inverse correspondence (see also \cite[Theorem 4.5]{Col}). Let  $(\X,\Y,\Z)$ be a strong \textsf{TTF}-triple in $\A$. Since $\Y$ is a \textsf{TTF}-class, by Remark \ref{rem Ker} it is bireflective and the embedding $i_*$ of $\Y$ in $\A$ admits a left adjoint $i^*$ and a right adjoint $i^!$. It is also a Serre subcategory, and we consider the quotient functor $j^*\colon\Acal\lxr \Acal/\Ycal$. Since the triple is strong it follows from Lemma \ref{loc coloc comparison} that $\Y$ is bilocalising. Thus, $j^*$ has both left and right adjoints, $j_!$ and $j_*$ respectively, which are fully faithful (because $j^*j_*$ and $j^*j_!$ are naturally equivalent to $\iden_{\Acal/\Ycal}$, see \cite{Gabriel}). 
Hence, we have a recollement $\textsf{R}(\Ycal,\Acal,\Acal/\Ycal)$. Clearly these correspondences are inverse to each other, up to equivalence of recollements.

Finally, since $i_*(\Bcal)$ is a bilocalising \textsf{TTF}-class as well as a (bireflective) Serre subcategory, the bijection between (i) and (ii) easily implies the bijections between (i), (iii) and (iv).
\end{proof}

Under some conditions on $\Acal$, the above bijection becomes more clear.

\begin{cor}\label{cor enough}
If $\Acal$ has enough projectives and injectives, then the equivalence classes of recollements of $\Acal$ are in bijection with the \textsf{TTF}-triples in $\Acal$.
\end{cor}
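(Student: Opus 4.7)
The plan is to deduce Corollary~\ref{cor enough} directly from \thmref{classif}, which already establishes a bijection between equivalence classes of recollements of $\Acal$ and \textbf{strong} \textsf{TTF}-triples. Thus it suffices to show that under the assumption that $\Acal$ has enough projectives and enough injectives, every \textsf{TTF}-triple in $\Acal$ is automatically strong, i.e.\ the adjective ``strong'' becomes redundant.

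First, I would observe that if $(\X,\Y,\Z)$ is any \textsf{TTF}-triple in $\Acal$, then by the definition of \textsf{TTF}-triple the torsion pair $(\X,\Y)$ is cohereditary (because $\Y$ is the torsion class of $(\Y,\Z)$, hence closed under quotients) and the torsion pair $(\Y,\Z)$ is hereditary (because $\Y$ is the torsion-free class of $(\X,\Y)$, hence closed under subobjects). This much holds in any abelian category, with no projectivity or injectivity hypothesis.

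Next, I would invoke the remark made in the paper immediately after \textbf{Definition~3.2}, namely that by \cite[Theorem 1.8 and 1.8$^*$]{Col}, in an abelian category with enough projectives every cohereditary torsion pair is strongly cohereditary, and dually in an abelian category with enough injectives every hereditary torsion pair is strongly hereditary. Since $\Acal$ has both enough projectives and enough injectives, the torsion pair $(\X,\Y)$ is strongly cohereditary and $(\Y,\Z)$ is strongly hereditary, so $(\X,\Y,\Z)$ is a strong \textsf{TTF}-triple by definition. Therefore the class of strong \textsf{TTF}-triples coincides with the class of all \textsf{TTF}-triples, and the bijection in \thmref{classif} yields the statement.

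There is no real obstacle here; the only substantive input is the cited theorem of Colpi--Fuller which converts (co)heredity into strong (co)heredity in the presence of enough (pro)injectives. Everything else is an immediate specialisation of \thmref{classif}.
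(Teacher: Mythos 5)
Your proof is correct and is essentially identical to the paper's own argument: both deduce the corollary from Theorem~\ref{classif} by using \cite[Theorem 1.8/1.8$^*$]{Col} to show that every \textsf{TTF}-triple is automatically strong when $\Acal$ has enough projectives and injectives. One small slip: the reference \cite{Col} is Ohtake--Tachikawa, not Colpi--Fuller, but this does not affect the argument.
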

\begin{proof}
Let $(\X,\Y,\Z)$ be a \textsf{TTF}-triple in $\A$. Since $\Acal$ has enough projectives and injectives it follows from \cite[Theorem $1.8/1.8^*$]{Col} that 
every \textsf{TTF}-triple in $\A$ is strong. The result then follows from Theorem \ref{classif}.
\end{proof}

Given a recollement, we then have the following notable equivalences.

\begin{cor}
Let $\textsf{R}(\Bcal,\Acal,\Ccal)$ be a recollement of $\Acal$, $\G$ be the Giraud subcategory associated to the torsion pair $(i_*(\Bcal),\Ker{i^!})$ and $\Hcal$ the Co-Giraud subcategory associated to the torsion pair $(\Ker{i^*},i_*(\Bcal))$. Then, $j^*$ induces$\colon$ 
\begin{enumerate}
\item an equivalence $\xymatrix{i_*(\B)^\perp=\G=\Image{j_*} \ \ar[r]^{\ \ \ \ \ \cong} & \ \Acal/i_*(\B)},$

\item an equivalence $\xymatrix{^\perp i_*(\B)=\Hcal=\Image{j_!} \ \ar[r]^{\ \ \ \ \ \cong} & \ \Acal/i_*(\B)}$ and

\item an equivalence $\xymatrix{\Ker{i^*}\cap\Ker{i^!} \ \ar[r]^{\ \ \ \ \cong} & \ \Acal/i_*(\B)}$.
\end{enumerate}
\end{cor}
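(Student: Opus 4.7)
My approach is to read off each equivalence from the interplay between the $\textsf{TTF}$-triple $(\Ker{i^*}, i_*(\Bcal), \Ker{i^!})$ obtained in the proof of Theorem \ref{classif} and the (Co-)Giraud descriptions of Section $3$. Since this $\textsf{TTF}$-triple is strong, Lemma \ref{loc coloc comparison} makes both torsion pairs amenable to the localisation/colocalisation machinery. For (i), Remark \ref{Giraud, Co-Giraud} applied to the strongly hereditary torsion pair $(i_*(\Bcal),\Ker{i^!})$ gives $\G = i_*(\Bcal)^\perp$; Theorem \ref{loc coloc}(ii) then exhibits $j^*$ as an equivalence $i_*(\Bcal)^\perp\lxr\Acal/i_*(\Bcal)$ with quasi-inverse $j_*$, so $\Image j_* = i_*(\Bcal)^\perp = \G$. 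Part (ii) is dual, replacing the strongly hereditary pair by the strongly cohereditary $(\Ker{i^*},i_*(\Bcal))$ and $j_*$ by $j_!$.

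For (iii), I will show that $j^*$ restricts to an equivalence $\Ker{i^*}\cap\Ker{i^!}\lxr\Acal/i_*(\Bcal)$. To establish essential surjectivity, I pick the canonical comparison morphism $\theta_C\colon j_!(C)\lxr j_*(C)$ corresponding, under the $(j^*,j_*)$-adjunction, to the natural isomorphism $j^*j_!(C)\cong C$, and set $M:=\Image\theta_C$. Since $M$ is a quotient of $j_!(C)$ and $i^*$ is right exact with $i^*j_! = 0$ (Proposition \ref{properties}(ii)), one gets $i^*(M)=0$; dually, the inclusion $M\monic j_*(C)$ together with $i^!j_* = 0$ and the left exactness of $i^!$ yield $i^!(M)=0$. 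Applying the exact functor $j^*$ to $\theta_C$ and invoking $j^*j_!\cong\iden_\Ccal\cong j^*j_*$ gives $j^*(M)\cong C$.

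For full faithfulness, I take $A\in\Ker{i^*}$ and $A'\in\Ker{i^!}$ and apply $\Hom_\Acal(A,-)$ to the exact sequence $0\to A'\to j_*j^*(A')\to i_*(B')\to 0$, which is the sequence from Proposition \ref{properties}(iv) with the $i_*i^!(A')$-term vanishing. Since $\Hom_\Acal(A,i_*(B'))\cong\Hom_\Bcal(i^*(A),B')=0$, left exactness of $\Hom_\Acal(A,-)$ gives $\Hom_\Acal(A,A')\cong\Hom_\Acal(A,j_*j^*(A'))$, and the $(j^*,j_*)$-adjunction rewrites the right-hand side as $\Hom_\Ccal(j^*(A),j^*(A'))$.

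I expect the essential-surjectivity step of (iii) to be the place where the proof pauses: one must produce a canonical preimage for each $C$ that lies simultaneously in $\Ker{i^*}$ and $\Ker{i^!}$, and neither $j_!(C)$ nor $j_*(C)$ has this property on its own. Recognising that $\Image\theta_C$ is the right object -- because both vanishing identities $i^*j_! = 0$ and $i^!j_* = 0$ conspire with right/left exactness -- is the main idea of the corollary, and is the step that merits the most careful write-up.
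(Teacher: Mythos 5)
Your treatment of (i) and (ii) is exactly the paper's: both follow from the strongness of the \textsf{TTF}-triple $(\Ker{i^*},i_*(\Bcal),\Ker{i^!})$ together with Remark \ref{Giraud, Co-Giraud} and Theorem \ref{loc coloc}. For (iii), however, the paper gives no argument at all --- it simply cites Gentle's Theorem $1.9$ on \textsf{TTF}-theories --- whereas you supply a self-contained proof, and a correct one. Your essential-surjectivity step is the standard device of taking the image $M$ of the canonical comparison map $\theta_C\colon j_!(C)\lxr j_*(C)$; the verifications $i^*(M)=0$ (right exactness of $i^*$ plus $i^*j_!=0$ applied to the epimorphism $j_!(C)\epic M$) and $i^!(M)=0$ (left exactness of $i^!$ plus $i^!j_*=0$ applied to the monomorphism $M\monic j_*(C)$) are exactly right, and $j^*(M)\cong C$ holds because $j^*(\theta_C)$ is, by construction of $\theta_C$, the composite of the isomorphisms $j^*j_!(C)\cong C\cong j^*j_*(C)$. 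Your full-faithfulness argument, applying $\Hom_\Acal(A,-)$ to the four-term sequence of Proposition \ref{properties}(iv) with the $i_*i^!(A')$-term vanishing and using $\Hom_\Acal(A,i_*(B'))\cong\Hom_\Bcal(i^*(A),B')=0$, is also correct, and is in fact slightly stronger than needed since it only requires $A\in\Ker{i^*}$ and $A'\in\Ker{i^!}$ separately. What your route buys is independence from the external reference; what the paper's citation buys is brevity and the placement of (iii) in its natural general context (it is a statement about arbitrary \textsf{TTF}-triples, not just those arising from recollements). The only point worth tightening in a final write-up is the remark that $j^*(\theta_C)$ is genuinely an isomorphism (not merely a map between isomorphic objects), which follows from unwinding the adjunction defining $\theta_C$.
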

\begin{proof}
Statements (i) and (ii) follow immediately from the fact that the \textsf{TTF}-triple $(\Ker{i^*},i_*(\Bcal),\Ker{i^!})$ is strong and from Remark \ref{Giraud, Co-Giraud}. Statement (iii) is well-known for \textsf{TTF}-triples (see \cite[Theorem $1.9$]{Gentle}). 
\end{proof}

\section{Recollements of Module Categories and \textsf{TTF}-triples}

In this section, $A$ is a unitary ring and $\Mod{A}$ the category of right $A$-modules. Since $\Mod{A}$ has enough projectives and injectives, by Corollary \ref{cor enough}, there is a bijection between equivalence classes of recollements of $\Mod{A}$ and \textsf{TTF}-triples in $\Mod{A}$. Moreover, there is a bijection between \textsf{TTF}-classes and bireflective Serre subcategories, since the closure conditions for both types of subcategories are the same (see Proposition \ref{Dickson} and Theorem \ref{Gab de la}). In particular, any bireflective Serre subcategory of $\Mod{A}$ is bilocalising by Lemma \ref{loc coloc comparison}. 
We will describe these categories in terms of ring epimorphisms. Similar results can be found in \cite[Section 7]{Aus} and in \cite[Proposition 5.3]{Geigle-Lenzing}.

\begin{prop}\label{Serre biref Mod}
Let $\Ycal$ be a bireflective Serre subcategory of $\Mod{A}$. Then there is an idempotent ideal $I$ of $A$ such that $\Ycal$ is the essential image of the restriction functor induced by the ring epimorphism $f\colon A\lxr A/I$.
\end{prop}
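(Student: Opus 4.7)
The strategy is to first identify $\Ycal$ as the essential image of a ring epimorphism via the bireflective hypothesis, then use the remaining Serre closure properties --- under quotients and under extensions --- to upgrade this epimorphism to a surjection onto $A/I$ for an idempotent ideal $I$. By Theorem \ref{Gab de la}, bireflectiveness of $\Ycal$ yields a ring epimorphism $f\colon A\lxr B$, unique up to epiclass, with $\Ycal = \Xcal_B = \Image f_*$; the $\Ycal$-reflection of $A$ is then precisely $\psi_A = f$.

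To show that $f$ is surjective, I consider the canonical projection $\pi\colon B\lxr B/f(A)$ in $\Mod{A}$, noting that $f(A)$ is an $A$-submodule of $B$ since it is a subring. As $B\in\Ycal$ and $\Ycal$ is Serre (hence closed under quotients), $B/f(A)\in\Ycal$. The zero map $0\colon A\lxr B/f(A)$ factors through the reflection $\psi_A=f$ via both the zero morphism $B\lxr B/f(A)$ and the canonical projection $\pi$ (since $\pi$ kills $f(A)$); uniqueness of this factorization forces $\pi = 0$, whence $B/f(A) = 0$. Setting $I := \Ker f$, this identifies $f$ (up to epiclass) with the canonical projection $A\lxr A/I$, and so identifies $\Ycal$ with the essential image of restriction along $A\lxr A/I$.

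To show $I^2 = I$, extension-closedness of $\Ycal$ combined with Remark \ref{Tor} gives $\Tor^A_1(A/I, A/I) = 0$. Tensoring the short exact sequence $0\lxr I\lxr A\lxr A/I\lxr 0$ of right $A$-modules on the right with $A/I$, and using projectivity of $A$, produces the exact sequence
\[
0 \lxr \Tor^A_1(A/I, A/I) \lxr I\otimes_A A/I \lxr A/I,
\]
where the rightmost arrow sends $i\otimes\bar a$ to $\overline{ia} = 0$ (as $ia\in I$). Consequently $\Tor^A_1(A/I, A/I) \cong I\otimes_A A/I$, and this last tensor product is easily seen to be $I/I^2$ (by tensoring the same resolution with $I\otimes_A -$). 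Therefore $I/I^2 = 0$, so $I$ is idempotent.

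The only subtle point, and what I expect to be the main obstacle on a first attempt, is the surjectivity argument: one must notice that Serre closure under quotients places $B/f(A)$ inside $\Ycal$ so that uniqueness of factorizations through the reflection $\psi_A$ forces this cokernel to vanish. The $\Tor$-computation witnessing idempotency of $I$ is then a routine application of Remark \ref{Tor}.
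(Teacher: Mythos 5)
Your proof is correct, and its overall skeleton --- invoke Theorem \ref{Gab de la} to realise $\Ycal$ as $\Image f_*$ for a ring epimorphism $f\colon A\lxr B$, prove that $f$ is surjective, and then deduce $I^2=I$ from extension-closedness via Remark \ref{Tor} --- is the same as the paper's. The one step you handle differently is surjectivity. The paper observes that, $\Ycal$ being a \textsf{TTF}-class, it is the torsion-free class of the torsion pair $({}^{\circ}\Ycal,\Ycal)$, so the reflection functor $\textsf{i}_\Ycal\textsf{L}_\Ycal\cong f_*(-\otimes_AB)$ is the torsion-free-quotient functor and its unit is an epimorphism on every module; in particular $f=\psi_A$ is surjective. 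You instead apply the universal property of the reflection $\psi_A=f$ directly: since $\Ycal$ is closed under quotients, $B/f(A)\in\Ycal$, and the two factorisations $\pi$ and $0$ of the zero map $A\lxr B/f(A)$ through $f$ must coincide, forcing $B/f(A)=0$. Both arguments ultimately rest on closure under quotients, but yours is a little more self-contained (it does not need to first identify $\Ycal$ as a torsion-free class via the bijection between \textsf{TTF}-classes and bireflective Serre subcategories), whereas the paper's yields the stronger intermediate fact that every unit $\psi_M$ is surjective. Your explicit verification that $\Tor_1^A(A/I,A/I)\cong I\otimes_AA/I\cong I/I^2$ correctly fills in a standard identity that the paper uses without proof.
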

\begin{proof}
Since $\Y$ is a bireflective subcategory of $\Mod{A}$, by Theorem \ref{Gab de la}, there is a ring epimorphism $f\colon A\lxr B$, for some ring $B$, such that $f_*(\Mod{B})=\Y$. We will now prove that $f$ is surjective. Since $\Y$ is a \textsf{TTF}-class, $({^\circ{\Y}},\Y)$ is a torsion pair and the composition $\textsf{i}_\Y \textsf{L}_\Y$ ($\textsf{i}_\Y$ being the inclusion $\Y \lxr \Mod{A}$ and $\textsf{L}_\Y$ its left adjoint) is the idempotent coradical functor sending a module $M$ to its torsion-free part. 
In particular the unit of this adjunction is surjective on every $A$-module. On the other hand, since $f_*(\Mod{B})=\Y$, it follows that $\textsf{i}_\Y \textsf{L}_\Y$ is naturally equivalent to $f_*(-\otimes_AB)$. Thus, $\psi_M\colon M\lxr M\otimes_AB$ is surjective for every $A$-module $M$. In particular, $f=\psi_A$ is surjective. Since $f_*(\Mod{B})$ is closed under extensions, for $I=\Ker f$ we have by Remark \ref{Tor}
$$0={\Tor}_1^A(B,B)={\Tor}_1^A(A/I,A/I)=I/I^2$$
and, thus, we infer that $\Y=f_*(\Mod{A/I})$, with $I^2=I$.
\end{proof}

We now recover Jans' bijection between \textsf{TTF}-triples and
idempotent ideals (Theorem \ref{Jans}) and classifiy equivalence classes of recollements of $\Mod{A}$.

\begin{prop}\label{classif Mod}
There is a bijection between equivalence classes of recollements of $\Mod{A}$,  \textsf{TTF}-triples in $\Mod{A}$ and idempotent ideals of $A$.
\end{prop}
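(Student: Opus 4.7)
The plan is to assemble two bijections, both largely reduced to results established earlier. Since $\Mod{A}$ has enough projectives and injectives, Corollary~\ref{cor enough} immediately yields the first half: equivalence classes of recollements of $\Mod{A}$ are in bijection with \textsf{TTF}-triples in $\Mod{A}$. It thus suffices to match \textsf{TTF}-triples with idempotent ideals of $A$.

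For the forward map, the first step is to observe that a \textsf{TTF}-triple $(\X,\Y,\Z)$ is determined by its middle class $\Y$, since $\X={}^\circ\Y$ and $\Z=\Y^\circ$. The closure conditions for \textsf{TTF}-classes in Proposition~\ref{Dickson} and for bireflective subcategories of $\Mod{A}$ in Theorem~\ref{Gab de la} coincide---both amount to closure under products, coproducts, kernels, cokernels and extensions---so $\Y$ is simultaneously a bireflective Serre subcategory of $\Mod{A}$. Proposition~\ref{Serre biref Mod} then supplies an idempotent ideal $I$ with $\Y = f_*(\Mod{A/I})$ for the projection $f\colon A\epic A/I$.

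For the inverse map, send an idempotent ideal $I$ to the full subcategory $\Y_I := f_*(\Mod{A/I})$, i.e.\ the $A$-modules annihilated by $I$. Closure of $\Y_I$ under submodules, quotients, products and coproducts is transparent; the only delicate point---and the single computational step in the whole argument---is extension-closedness. This is where I expect the one piece of actual work: invoke Remark~\ref{Tor}, which reduces extension-closedness to the vanishing of $\Tor_1^A(A/I,A/I)$, and then use the standard short exact sequence $0\to I\to A\to A/I\to 0$ to identify $\Tor_1^A(A/I,A/I)$ with $I/I^2$, which vanishes precisely because $I$ is idempotent. Hence $\Y_I$ is a bireflective Serre subcategory and therefore a \textsf{TTF}-class.

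Mutual inversion is then automatic: one direction is the content of Proposition~\ref{Serre biref Mod}, while the other is the uniqueness built into Theorem~\ref{Gab de la}, which recovers $I$ from $\Y_I$ as the kernel of the associated ring epimorphism. Composed with the bijection from Corollary~\ref{cor enough}, this closes the required chain of bijections between equivalence classes of recollements of $\Mod{A}$, \textsf{TTF}-triples in $\Mod{A}$, and idempotent ideals of $A$.
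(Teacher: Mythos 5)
Your argument is correct, and it shares the paper's skeleton: the first bijection comes from Corollary~\ref{cor enough} in both versions, and the forward direction of the second rests on Proposition~\ref{Serre biref Mod} in both. The genuine difference is in how the correspondence between \textsf{TTF}-triples and idempotent ideals is organised. The paper mediates it through Proposition~\ref{gen Jans}: a \textsf{TTF}-triple is encoded by a right exact coradical endofunctor preserving coproducts, Proposition~\ref{Serre biref Mod} identifies that functor as $f_*(-\otimes_A A/I)$, and for the converse one verifies that $f_*(-\otimes_A A/I)$ has the required functorial properties for any idempotent ideal $I$. You instead stay on the level of subcategories: you identify \textsf{TTF}-classes with bireflective Serre subcategories, verify the converse by checking the closure properties of $\Y_I$ directly (with $\Tor_1^A(A/I,A/I)\cong I/I^2$ carrying the extension-closedness, exactly the computation the paper performs inside Proposition~\ref{Serre biref Mod}), and extract injectivity from the uniqueness of the epiclass in Theorem~\ref{Gab de la}. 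The two routes are interchangeable via the standard dictionary between torsion classes and (co)radical functors; yours makes the mutual inversion more explicit (the paper's ``thus uniquely determined by the ideal $I$'' is precisely your epiclass argument), while the paper's phrasing stays closer to Jans' original functorial formulation. One caveat: your gloss that the closure conditions of Proposition~\ref{Dickson} and Theorem~\ref{Gab de la} ``both amount to closure under products, coproducts, kernels, cokernels and extensions'' is too weak --- a bireflective extension-closed subcategory need not be closed under subobjects (e.g.\ $\Mod{\mathbb{Q}}$ inside $\Mod{\mathbb{Z}}$), and a \textsf{TTF}-class genuinely requires closure under all subobjects and quotients. This does not damage your proof, since you only use the (correct) implication that a \textsf{TTF}-class is a bireflective Serre subcategory and you check the full list of closure properties for $\Y_I$ by hand, but the sentence should be reworded.
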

\begin{proof}
The bijection between equivalence classes of recollements and \textsf{TTF}-triples follows from Corollary \ref{cor enough}, since $\Mod{A}$ has enough projectives and injectives. The bijection between \textsf{TTF}-triples and idempotent ideals of $A$ can be seen as a consequence of Proposition \ref{gen Jans}. Indeed, the bijection in that proposition assigns to a \textsf{TTF}-triple a functor which, by Proposition \ref{Serre biref Mod}, is precisely $f_*(-\otimes_AA/I)$ for some idempotent ideal $I$ and $f\colon A\lxr A/I$ the canonical projection, thus uniquely determined by the ideal $I$. Conversely, given an idempotent ideal $I$ and the quotient map $f\colon A\lxr A/I$ it is easy to check that $f_*(-\otimes_AA/I)$ is a right exact idempotent coradical endofunctor of $\Mod{A}$ preserving coproducts, thus finishing the proof.
\end{proof}
We say that a recollement of $\Mod{A}$ is a \textbf{recollement by module categories} if it is equivalent to a recollement in which the categories involved are module categories. We  recall the conjecture made by Kuhn in \cite{Kuhn}.\\

\noindent\textbf{Conjecture} \cite{Kuhn} \textit{Let $A$ be a finite dimensional algebra over a field. Then any recollement of $\Mod{A}$ by module categories is equivalent to a recollement induced by an idempotent element.}\\

Indeed, this statement is true for any ring $A$.

\begin{thm}\label{bijection Mod}
A recollement of $\Mod{A}$ is a recollement by module categories if and only if it is equivalent to a recollement induced by an idempotent element of a ring $S$, Morita equivalent to $A$.
\end{thm}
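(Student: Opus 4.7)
The \emph{if} direction will follow immediately from Example~\ref{example} combined with the observation that a Morita equivalence $\Mod{S}\simeq\Mod{A}$ transports a recollement of $\Mod{S}$ to a recollement of $\Mod{A}$ by module categories.

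For the \emph{only if} direction, suppose $\textsf{R}(\B,\Mod{A},\C)$ is a recollement with $\B\simeq\Mod{R}$ and $\C\simeq\Mod{T}$ for some rings $R$ and $T$. My plan is to construct a progenerator of $\Mod{A}$ whose endomorphism ring $S$ comes equipped with a natural idempotent $e$ realising the given recollement as the standard one attached to $e$. The crucial object will be $X:=j_!(T_T)\in\Mod{A}$. First I would verify that $X$ is finitely generated projective: projectivity holds because $j_!$ is left adjoint to the exact functor $j^*$, and hence preserves projectives; for compactness I would exploit the natural isomorphism
\[
\Hom_A(X,-)\ \cong\ \Hom_\C(T_T,\,j^*(-))
\]
together with the observation that $j^*$, being left adjoint to $j_*$, preserves coproducts, and that $T_T$ is compact in $\Mod{T}$. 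This shows that $\Hom_A(X,-)$ preserves coproducts, so $X$ is compact, hence finitely generated.

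Next I would set $P:=A\oplus X$, a finitely generated projective generator of $\Mod{A}$, and $S:=\End_A(P)$, obtaining a Morita equivalence $\Phi:=\Hom_A(P,-)\colon\Mod{A}\lxr\Mod{S}$. The decomposition of $P$ produces an idempotent $e\in S$ (the projection onto the summand $X$) with $eSe\cong\End_A(X)\cong\End_\C(T_T)\cong T$, so Example~\ref{example} supplies the recollement $\textsf{R}(\Mod{S/SeS},\Mod{S},\Mod{eSe})$. To conclude that this agrees, via $\Phi$, with the original recollement, I would invoke Proposition~\ref{classif Mod}: equivalence classes of recollements of $\Mod{S}$ are determined by their associated bilocalising Serre subcategory, so it suffices to match these. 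Using that $T_T$ generates $\C$, one identifies $i_*(\B)=\Ker j^*=\{M\in\Mod{A}:\Hom_A(X,M)=0\}$; under $\Phi$ this condition becomes $\Phi(M)\cdot e=0$, so $\Phi(i_*(\B))=\Mod{S/SeS}$, which is exactly the Serre subcategory of the idempotent-induced recollement.

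The main subtle point is the compactness of $X=j_!(T_T)$: it rests on the fact that the quotient functor $j^*$, being a left adjoint, commutes with coproducts. Once this is in place, the remainder of the argument is a formal bookkeeping combining Morita theory with the classification of recollements from Section~4.
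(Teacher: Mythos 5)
Your proof is correct and follows essentially the same route as the paper: both arguments hinge on showing that $j_!$ applied to a small projective generator of the quotient category is a compact (hence finitely generated) projective $A$-module, and then realising it as the image of an idempotent in the endomorphism ring of a progenerator of $\Mod{A}$. The only cosmetic differences are that the paper takes $S=\End_A(A^{\oplus n})$ for a split epimorphism $A^{\oplus n}\twoheadrightarrow j_!(P)$ rather than $\End_A(A\oplus j_!(P))$, and verifies the equivalence of recollements by a direct computation of $\Theta j^*$ instead of matching the associated Serre subcategories via Proposition~\ref{classif Mod}.
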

\begin{proof}
By Proposition $5.1$, any recollement of $\Mod{A}$ is equivalent to 
\begin{equation}\label{rec induced by I}
\xymatrix@C=0.5cm{
\Mod{A/I} \ar[rrr]^{\inc} &&& \Mod{A} \ar[rrr]^{j^*} \ar
@/_1.5pc/[lll]_{-\otimes_AA/I}  \ar
 @/^1.5pc/[lll]^{\Hom_A(A/I,-)} &&& \Ccal_I,
\ar @/_1.5pc/[lll]_{j_!} \ar
 @/^1.5pc/[lll]^{j_*}
 } 
\end{equation}
for some idempotent ideal $I$ of $A$ and $\Ccal_I$ the corresponding quotient category.  
Clearly, if $I$ is generated by an idempotent element $e$ in $A$, then $\Ccal_I$ is equivalent to $\Mod{eAe}$, see Example \ref{example}. 

Conversely, assume that (\ref{rec induced by I}) is equivalent to a recollement by module categories. Let $P$ be a small (i.e., $\Hom_{\Ccal_I}(P,-)$ commutes with coproducts) projective generator of $\Ccal_I$, which exists since we assume that $\Ccal_I$ is equivalent to a module category. Let us denote by $C$ the ring $\End_{\Ccal_I}(P)$ and by $\Theta$ the equivalence $\Hom_{\Ccal_I}(P,-)\colon \Ccal_I\lxr \Mod{C}$. The object $j_!(P)$ is projective since we have the adjoint pair $(j_!,j^*)$ and the functor $j^*$ is exact.
It is also small since $j^*$ commutes with coproducts and $P$ is small. Since a projective object is small in a module category if and only if it is finitely generated, there is a surjective map $p\colon A^{\oplus n}\lxr j_!(P)$, for some $n$ in $\mathbb{N}$. This surjective map splits since $j_!(P)$ is a projective $A$-module, i.e., there is an injective map $h\colon j_!(P)\lxr A^{\oplus n}$ such that $ph=\iden_{j_!(P)}$. Let $S$ denote the endomorphism ring of $A^{\oplus n}$, i.e., $S=\End_A(A^{\oplus n})$, and let $\Phi:=\Hom_A(A^{\oplus n},-)$ denote the Morita equivalence between $\Mod{A}$ and $\Mod{S}$. Then we have a surjection $\Phi(p)\colon S=\Phi(A^{\oplus n})\lxr \Phi(j_!(P))$ which splits via $\Phi(h)$, i.e., $\Phi(j_!(P))$ is a direct summand of $S$. Moreover, it is precisely generated by the idempotent $\Phi(h)\Phi(p)$ in $\End_S(S)$, which, under the isomorphism $\End_S(S)\cong S$ is identified with $hp$. Denote this element by $e$. Clearly, $eS$ is the image of $\Phi(h)\Phi(p)$ and it is isomorphic to $\Phi(j_!(P))$ in $\Mod{S}$. Since both $j_!$ and $\Phi$ are fully faithful, 
\begin{eqnarray}
C &=& \Hom_{\Ccal_I}(P,P) \ \cong \ \Hom_A(j_!(P),j_!(P)) \nonumber \\
 &\cong& \Hom_S(\Phi(j_!(P)),\Phi(j_!(P))) \ \cong \ \Hom_S(eS,eS) \ \cong \ eSe. \nonumber 
\end{eqnarray}
The last isomorphism is $\alpha\colon eSe\lxr \End_S(eS)$, sending an element in $eSe$ to the endomorphism given by left multiplication with it. This is clearly an injective ring homomorphism. Given an endomorphim $g$ of $eS$, $g$ is given by left multiplication with $g(e)$. Since $g(e)$ lies in $eS$ and $g(e)e=g(e^2)=g(e)$, $g(e)$ lies in $eSe$. Thus, $\alpha$ is surjective.
Now, the functors $\Theta\colon \Ccal_I\lxr \Mod{eSe}$ and $\Phi$ form an equivalence of recollements from $\textsf{R}(\Mod{A/I},\Mod{A},\Ccal_I)$ to $\textsf{R}(\Mod{S/SeS},\Mod{S},\Mod{eSe})$. Indeed, we have natural isomorphisms
\begin{eqnarray}
\Theta j^*(M) &=& \Hom_{\Ccal_I}(P,j^*M) \ \cong \ \Hom_A(j_!(P),M) \nonumber \\
&\cong & \Hom_S(\Phi(j_!(P)),\Phi(M)) \ = \ \Hom_S(eS,\Phi(M)), \nonumber 
\end{eqnarray}
Since the functor $\Hom_S(eS,-)$ is the quotient functor $\Mod{S}\lxr \Mod{eSe}$, $\Phi$ and $\Theta$ form an equivalence of recollements, as wanted.
\end{proof}

Under additional conditions, we can say more about the ideal $I$ of $A$. If $A$ admits the Krull-Schmidt property for finitely generated projective $A$-modules (i.e., $A$ is semiperfect), we can simplify the statement of the theorem. 

\begin{cor}
Let $A$ be a semiperfect ring. Then a recollement of $\Mod{A}$ is a recollement by module categories if and only if the associated idempotent ideal $I$ is generated by an idempotent element.
\end{cor}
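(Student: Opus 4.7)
My plan is to prove the two implications separately. The forward direction is immediate: if $I=AeA$ for some idempotent $e\in A$, Example~\ref{example} gives a recollement of $\Mod{A}$ by module categories with associated idempotent ideal $I$, and Proposition~\ref{classif Mod} identifies this, up to equivalence, with the recollement determined by $I$.

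For the converse, I would start from the standard form~(\ref{rec induced by I}) of the recollement and, using that it is by module categories, pick a small projective generator $P$ of $\Ccal_I$; arguing as in the proof of Theorem~\ref{bijection Mod}, $Q:=j_!(P)$ is a finitely generated projective right $A$-module. The crucial step, which I expect to be the main technical obstacle, is to identify $I$ with the trace ideal $\tau_Q:=\sum_{\varphi\in\Hom_A(Q,A)}\varphi(Q)$ of $Q$ in $A$. For this I would combine two descriptions of $\Ker{j^*}$: by Proposition~\ref{Serre biref Mod} it equals $\Mod{A/I}=\{M:MI=0\}$, while the adjunction $(j_!,j^*)$ together with the generating property of $P$ gives $\Ker{j^*}=\{M\in \Mod{A}:\Hom_A(Q,M)=0\}$. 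Realising $Q$ as a summand of some $A^n$, the latter condition is equivalent to $M\tau_Q=0$, so that $\Mod{A/I}=\Mod{A/\tau_Q}$ and hence $I=\tau_Q$.

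The conclusion would then follow quickly from semiperfectness: fixing a complete set $\{e_1,\ldots,e_m\}$ of pairwise orthogonal primitive idempotents of $A$, Krull--Schmidt for finitely generated projective modules over a semiperfect ring yields $Q\cong \bigoplus_{i=1}^m (e_iA)^{n_i}$ for some $n_i\geq 0$. Setting $f:=\sum_{i\,:\,n_i>0}e_i$, an idempotent of $A$ as a sum of pairwise orthogonal idempotents, one obtains
$$I \ = \ \tau_Q \ = \ \sum_{i\,:\,n_i>0}Ae_iA \ = \ AfA,$$
so $I$ is generated by the idempotent $f$, as desired.
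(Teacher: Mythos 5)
Your argument is correct, but it reaches the conclusion by a genuinely different route than the paper. The paper normalises first: it chooses a \emph{basic} small projective generator $P$ of $\Ccal_I$, uses the Krull--Schmidt property of the semiperfect ring $A$ to see that $j_!(P)$ is a direct summand of $A$ itself (not merely of some $A^{\oplus n}$), re-runs the proof of Theorem \ref{bijection Mod} with $S=A$ to produce an idempotent $e\in A$ and an equivalence between the given recollement and the one induced by $e$, and finally invokes the uniqueness of epiclasses (Theorem \ref{Gab de la}) to conclude that the two surjections $A\lxr A/I$ and $A\lxr A/AeA$ coincide up to isomorphism, whence $I=AeA$. You instead identify $I$ outright as the trace ideal $\tau_Q$ of $Q=j_!(P)$, by matching the two descriptions of $\Ker{j^*}$ --- as $\{M: MI=0\}$ via Proposition \ref{Serre biref Mod} and as $\{M:\Hom_A(Q,M)=0\}$ via the adjunction and the generating property of $P$ --- together with the dual-basis fact that $\Hom_A(Q,M)=0$ if and only if $M\tau_Q=0$ for $Q$ finitely generated projective. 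This identity $I=\tau_{j_!(P)}$ holds over an arbitrary ring and is a reusable observation in its own right; semiperfectness enters only at the very end, to decompose $Q$ into copies of the $e_iA$ and write $\tau_Q=AfA$ for the idempotent $f=\sum_{i:n_i>0}e_i$ (note $Ae_iA\subseteq AfA$ since $e_i=e_ife_i$, and conversely $AfA\subseteq\sum_i Ae_iA$). Your version avoids both the choice of a basic generator and the epiclass argument, at the cost of importing the standard trace-ideal lemma; the paper's version stays entirely within the machinery it has already set up. Both are complete proofs.
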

\begin{proof}
This follows from the proof of Theorem $5.2$. Let $P$ be a basic (i.e., every indecomposable summand occurs with multiplicity one) small projective generator of $\Ccal_I$. Then $j_!(P)$ is also basic. Since $A$ satisfies the Krull-Schmidt property for projective modules, $j_!(P)$ is a direct summand of $A^{\oplus n}$ if and only if it is a direct summand of $A$. Using the arguments in the proof of Theorem $5.2$ for $S=A$, we see that there is an equivalence of recollements induced by $\Phi$, $\iden_{\Mod{A}}$ and $\Psi$ from the recollement (\ref{rec induced by I}) to the recollement induced by the idempotent element $e$ (see Example \ref{example}). Thus, the essential image of the embeddings $\Mod{A/I}\lxr \Mod{A}$ and $\Mod{A/AeA}\lxr \Mod{A}$ coincide. By Theorem \ref{Gab de la}, the epimorphisms $f\colon A\lxr A/I$ and $g\colon A\lxr A/AeA$ must then lie in the same epiclass, i.e., there is an isomorphism $h\colon A/I\lxr A/AeA$ such that $hf=g$. Note now that, since $h$ is an isomorphism, we have that $g(I)=0$ and $f(AeA)=0$, thus showing that $I=AeA$, as wanted. 
\end{proof}

Recall that $A$ is semiprimary if the Jacobson radical $\textsf{J}(A)$ is nilpotent and $A/\textsf{J}(A)$ is semisimple. Indeed, semiprimary rings are semiperfect (see, for example, \cite[Corollary 23.19]{Lam}) and  every idempotent ideal is generated by an idempotent element of $A$ (\cite{DRi}). Finite dimensional algebras over a field are well-known examples of semiprimary rings. The following corollary provides an answer to Kuhn's question in the context where it appeared.

\begin{cor}
Let $A$ be a semiprimary ring. Then any recollement of $\Mod{A}$ is equivalent to a recollement induced by an idempotent element of $A$. In particular, any recollement of $\Mod{A}$ is a recollement by module categories.
\end{cor}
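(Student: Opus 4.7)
The plan is to combine the classification of recollements of $\Mod{A}$ established earlier in the paper with the cited ring-theoretic fact from \cite{DRi} that in a semiprimary ring every two-sided idempotent ideal is generated by an idempotent element. Starting from an arbitrary recollement of $\Mod{A}$, Proposition \ref{classif Mod} produces a uniquely determined idempotent ideal $I$ of $A$ such that the given recollement is equivalent to the one of the form $(\ref{rec induced by I})$ attached to $I$. This is the only step in which the bulk of the machinery developed in Sections $3$ and $4$ is used.

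Since $A$ is semiprimary, the cited theorem from \cite{DRi} yields an idempotent $e \in A$ with $I = AeA$. I would then argue that the recollement $(\ref{rec induced by I})$ associated to this particular $I$ coincides, up to equivalence, with the recollement induced by $e$ from Example \ref{example}. The most economical way to see this is via the bijection in Proposition \ref{classif Mod}: both recollements correspond to the same idempotent ideal $I = AeA$ and hence lie in the same equivalence class. Alternatively, one can verify by direct inspection that their leftmost terms $\Mod{A/AeA} = \Mod{A/I}$ and their middle adjoint triples $(-\otimes_A A/AeA,\ \inc,\ \Hom_A(A/AeA,-))$ agree, which forces the outer adjoints on the right-hand side to match and identifies $\Ccal_I$ with $\Mod{eAe}$ as in Example \ref{example}. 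This proves the first assertion.

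The "In particular" statement is then immediate, since the three categories appearing in Example \ref{example}, namely $\Mod{A/AeA}$, $\Mod{A}$ and $\Mod{eAe}$, are all module categories by construction. I do not expect any genuine obstacle here: the substantive external input is the theorem of \cite{DRi}, whose statement is quoted rather than reproved, and everything else reduces to a short application of the classification tools already in place (Propositions \ref{Serre biref Mod} and \ref{classif Mod}). The one point that deserves a passing comment is that a semiprimary ring is semiperfect \cite[Corollary 23.19]{Lam}, so the auxiliary hypotheses used in Corollary $5.4$ also happen to hold, though strictly speaking they are not needed for this particular argument.
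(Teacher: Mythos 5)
Your proposal is correct and follows essentially the same route as the paper: extract the idempotent ideal $I$ via Proposition \ref{classif Mod}, invoke the semiprimary hypothesis together with the result of \cite{DRi} to write $I=AeA$ for an idempotent $e$, and identify the resulting recollement (\ref{rec induced by I}) with the one induced by $e$ as in Example \ref{example}. The appeal to the injectivity of the bijection in Proposition \ref{classif Mod} to match the two recollements is exactly the intended mechanism, so no further comment is needed.
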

\begin{proof}
Let $I$ be the idempotent ideal associated to a recollement of $\Mod{A}$, as in the proof of Theorem \ref{bijection Mod}. Since $A$ is semiprimary, $I=AeA$ for some $e$ idempotent element of $A$. Thus, the equivalent recollement (\ref{rec induced by I}) is induced by the ring epimorphism $f\colon A\lxr A/AeA$, thus finishing the proof.
\end{proof}


\begin{thebibliography}{99}

\bibitem{AKL}
\textsc{L.~Angeleri H\"ugel}, \textsc{S.~Koenig} and \textsc{Q.~Liu}, {\em Recollements and tilting objects}, J. Pure Appl. Algebra {\bf 215} (2011), no. 4, 420--438.

\bibitem{AKL2}
\textsc{L.~Angeleri H\"ugel}, \textsc{S.~Koenig} and \textsc{Q.~Liu}, {\em On the uniqueness of stratifications of derived module categories}, J. Algebra {\bf 359} (2012), 120--137.

\bibitem{AKL3}
\textsc{L.~Angeleri H\"ugel}, \textsc{S.~Koenig} and \textsc{Q.~Liu}, {\em Jordan--H\"older theorems for derived module categories of piecewise hereditary algebras}, J. Algebra {\bf 352} (2012), 361--381.

\bibitem{Aus}
\textsc{M.~Auslander}, {\em Representation theory of Artin algebras I}, Comm. Algebra {\bf 1} (1974), 177--268.

\bibitem{BBD}
\textsc{A.~Beilinson, J.~Bernstein and P.~Deligne}, {\em Faisceaux
Pervers}, (French) [Perverse sheaves], Analysis and topology on
singular spaces, I (Luminy, 1981), 5--171, Asterisque, {\bf 100},
Soc. Math. France, Paris, (1982).

\bibitem{BR}
\textsc{A.~Beligiannis, I.~Reiten}, {\em Homological and homotopical aspects of torsion theories}, Mem. Amer. Math. Soc. {\bf 188} (2007), no. 883, viii+207 pp.

\bibitem{Xi:1} 
\textsc{H.~Chen, C.C.~Xi}, {\em Good tilting modules and recollements of derived module categories}, Proc. London Math. Soc. (2012) {\bf 104} (5): 959-996. (2012).

\bibitem{Xi:2} 
\textsc{H.~Chen, C.C.~Xi}, {\em Homological ring epimorphisms and recollements II: Algebraic K-theory}, arXiv:math/1212.1879 (2012).

\bibitem{CZ}
\textsc{Q.Chen and M.Zheng}, {\em Recollements of abelian categories and special types of comma categories}, J. Algebra {\bf 321}, Issue 9 (2009), 2474--2485.


\bibitem{Dickson}
\textsc{S.E.~Dickson}, {\em A torsion theory for abelian categories}, Trans. Amer. Math. Soc. {\bf 121} (1966), 223--235.


\bibitem{DRi}
\textsc{V.~Dlab, and C.M.~Ringel}, {\em Quasi-hereditary algebras}, Illinois J. Math. Soc. {\bf 33} (1989), no. 2, 280--291.


\bibitem{Pira}
\textsc{V.~Franjou and T.~Pirashvili}, {\em Comparison of abelian
categories recollements}, Documenta Math. {\bf 9} (2004), 41--56.


\bibitem{Gabriel}
\textsc{P.~Gabriel}, {\em  Des Cate\'gories Abelie\'nnes}, Bull. Soc.
Math. France {\bf 90}, (1962), 323--448.

\bibitem{GdP} \textsc{P.~Gabriel and J.~de la Pe\~na}, {\em Quotients of representation-finite algebras}, Comm. Algebra {\bf 15}, no.1-2, (1987), 279--307.


\bibitem{Geigle-Lenzing}
\textsc{W.~Geigle and H.~Lenzing}, {\em  Perpendicular categories with applications to representations and sheaves},
J. Algebra {\bf 144}, (1991), no.2, 273--343.

\bibitem{Gentle}
\textsc{R.~Gentle}, {\em  T.T.F. Theories in Abelian Categories},
Comm. Algebra {\bf 16}, (1996), 877--908.

\bibitem{GP}
\textsc{E.L.~Green, C.~Psaroudakis}, {\em On Artin algebras arising from Morita contexts}, arXiv:1303.2083.

\bibitem{I} \textsc{O.Iyama}, {\em Rejective subcategories of artin algebras and orders}, arXiv:math/0311281 (2003).

\bibitem{Jans}
\textsc{J.P.~Jans}, {\em Some aspects of torsion}, Pacific. J. Math. {\bf 15} (1965), no. 4, 1249--1259.



\bibitem{Kuhn}
\textsc{N.J.~Kuhn}, {\em The generic representation theory of finite fields: a survey of basic structure}. Infinite length modules (Bielefeld 1998), 193--212, Trends Math., Birkhauser, Basel, 2000.

\bibitem{Lam}
\textsc{T.Y.~Lam}, {\em A first course in Noncommutative rings}, Second Edition, Springer, 2001.

\bibitem{LW}
\textsc{Z.Lin and M.Wang} {\em Koenig's theorem for recollements of module categories}, Acta Math. Sinica (Chin. Ser.) {\bf 54}, no. 3 (2011), 461--466. 


\bibitem{MV}
\textsc{R.~MacPherson and K.~Vilonen}, {\em Elementary construction of perverse
sheaves}, Invent. Math. {\bf 84} (1986), 403--436.

\bibitem{Neeman}
\textsc{A.~Neeman}, {\em Triangulated categories}, Annals of Mathematics studies. Princeton University Press, {\bf 148}, (2001).

\bibitem{Nicolas}
\textsc{P.~Nicol\'as}, {\em On torsion torsionfree triples}, Ph.D. Thesis, Universidad de Murcia, 2007. 

\bibitem{NS}
\textsc{P.~Nicol\'as and M.~Saorin} {\em Classification of split torsion torsionfree triples in module categories}, J. Pure Appl. Algebra 208, no. 3 (2007), 979--988.


\bibitem{Col}
\textsc{K.~Ohtake and H.~Tachikawa}, {\em Colocalization and
localization in abelian categories}, J. Algebra {\bf 56} (1979), no.
1, 1--23.

\bibitem{PS} \textsc{B.~Parshall and L.~Scott}, {\em Derived categories, quasi-hereditary algebras, and algebraic groups}, Carlton Univ. Math. Notes 3 (1989), 1--111.

\bibitem{recol}
\textsc{C.~Psaroudakis}, {\em Homological Theory of Recollements of Abelian Categories}, submitted, University of Ioannina, (2012).



\bibitem{St} 
\textsc{B.~Stenstr{\" o}m}, {\em Rings of quotients}, Springer (1975).


\end{thebibliography}
\end{document}